\newtheorem{proposition} {Proposition}[section]
\newtheorem{assumption} {Assumption}[section]
\newtheorem{theorem}{Theorem}[section]
\newtheorem{lemma}{Lemma}[section]
\theoremstyle{definition}
\newtheorem{definition}{Definition}[section]
\newtheorem{remark}{Remark}[section]
 \newtheorem{example}{Example}[section]
\newcommand{\tto}{\rightarrow} \newcommand{\nn}{\nonumber}
\def \[{\begin{equation}} \def \]{\end{equation}}
\begin{document}

\title{ An inexact augmented Lagrangian method for nonsmooth optimization on Riemannian manifold \thanks{This research was supported by the Natural Science Foundation of China (Grant. 11571074) }}

\author{ Kang-Kang Deng $^\dag$, ~~Zheng Peng \footnote{College of Mathematics and Computer Science, Fuzhou University,  Fuzhou  350108,   China. } \footnote{School of Mathematics and Computational Science, Xiangtan Univeristy, Xiangtan  411105, China. Corresponding author, E-mail: pzheng@fzu.edu.cn. }}
\date{}
\maketitle
 
\noindent {\textbf{Abstract:}  We consider a  nonsmooth optimization problem on Riemannian manifold, whose objective function is  the sum of a differentiable component and a nonsmooth convex   function.   The problem  is  reformulated to a separable form.  We propose a manifold inexact augmented Lagrangian method (MIALM)  for the   considered problem.   By utilizing the Moreau envelope, we get a smoothing  subproblem  at each iteration of the proposed method. Theoretically,  the convergence to critical point of the proposed method is established under suitable assumptions. In particular, if an approximate global minimizer of the iteration subproblem is  obtained at each iteration, we prove that the sequence generated by the proposed method converges to a global minimizer of the origin problem.  Numerical experiments show that, the MIALM is a competitive method compared to some existing methods.

\noindent\textbf{Keywords: }    Manifold optimization; Nonsmooth optimization; Augmented Lagrangian method; Moreau envelope.

\noindent\textbf{Mathematics Subject Classification:  }{90C30, 90C26}

\numberwithin{equation}{section}

\section{Introduction}

Riemannian manifold  optimization is  a  class of constrained  optimization problems, in which the constraint set is a subset of  Riemannian manifold $\mathcal{M}$. It has recently aroused considerable research interests due to the wide  applications in   different fields such as computer vision, signal processing, etc \cite{AbsMahSep2008}. In these applications, manifold $\mathcal{M}$ could be  Stiefel manifold, Grassmann manifold, or symmetric positive definite manifold. Analogy to classical optimization methods in Euclidean space, some Riemannian optimization methods  have been explored,  e.g.,  gradient-type methods\cite{AbsMahSep2008,Boumal2016Global,Zhang2016First}, Newton-type methods\cite{HuaGalAbs2015,YHAG2017,Bortoloti2018Damped} and trust region methods\cite{Absil2007Trust,Boumal2015Riemannian,Huang2015A}.

In this paper, we consider a nonsmooth nonconvex Riemannian manifold optimization problem  as follows
 \begin{equation}\label{problem}\left\{
  \begin{split}
     \min_{X\in\mathbb{R}^{n\times r}} ~& F(X): =  f(X) + g(AX)\\
     \mbox{s.t. }~ & X\in\mathcal{M},
  \end{split}\right.
\end{equation}
where $f: \mathcal{M}\rightarrow \mathbb{R}$ is a smooth but possibly nonconvex function, $g$ is convex  but nonsmooth, and  $\mathcal{M}$ is a Riemannian submanifold embedded in  Euclidean space $\mathbb{E}$.  Many convex or non-convex problems in machine learning applications have the form of problem \eqref{problem},  e.g.,  sparse principle component analysis \cite{zou2006sparse}, sparse canonical correlation analysis \cite{witten2009penalized}, robust low-rank matrix completion
 \cite{cambier2016robust,huang2013optimization} and multi-antenna channel communications \cite{zheng2002communication,gohary2009noncoherent}, etc.

 Absil and Hosseini \cite{absil2017collection} presented  many examples of manifold optimization with nonsmooth objective. We list three representative examples in the following.
\begin{example}[Sparse principle component analysis (SPCA)]
  \begin{equation}
   \left\{ \begin{array}{rl}
      \min\limits_{X\in\mathbb{R}^{n\times r}} & -X^TA^TAX + \lambda\|X\|_1, \\
      \mbox{s.t.} ~& X^TX = I_r.
    \end{array}\right.
  \end{equation}
\end{example}
\begin{example}[Compressed modes in physics (CMs)]
\begin{equation}\left\{
  \begin{split}
     \min_{\Psi\in\mathbb{R}^{n\times r}} & tr(\Psi^T\Delta \Psi) + \mu \|\Psi\|_1, \\
     \mbox{s.t. } ~~ & \Psi^T\Psi = I_r.
  \end{split}\right.
\end{equation}
\end{example}
\begin{example}[Robust low-rank matrix completion]
  \begin{equation}\left\{
    \begin{array}{rl}
      \min\limits_{X\in\mathbb{R}^{n\times n}} & \|P_{\Omega}(X-M)\|_1, \\
      \mbox{s.t.} & X\in\mathcal{M}_r: = \{X | \mbox{ rank }(X) = r\}.
    \end{array}\right.
  \end{equation}
\end{example}

Problem \eqref{problem} is reformulated to a separable form in this paper, and then  a manifold inexact augmented Lagrangian method (MIALM) is proposed for the resulting  separable optimization problem. The iteration subproblem of the MIALM is formulated to a smooth optimization problem by utilizing the Moreau envelope, it could be solved by some classical Riemannian optimization methods such as  Riemannian gradient/Newton/Quasi-Newton method. This algorithmic framework is adapted from \cite{li2018highly, lin2019efficient, dhingra2018proximal} for  classical nonsmooth composite problem in Euclidean space, which has drawn significant research attentions.  
The convergence to critical point of the proposed  MIALM method is established under some mild assumptions. In particular, under the assumption that  an approximate global minimizer of the iteration subproblem could be obtained, the convergence to   global minimizer of the original problem is proved. Numerical experiments show that, the MIALM is competitive  compared to some existing methods.

The rest of this paper is organized as follows.  Some related works on nonsmooth manifold optimization problem are summarized in Section \ref{rwks}, and  some preliminaries on  manifold are given in Section \ref{preli}. In Section \ref{paml}, a manifold inexact augmented Lagrangian method is proposed and the iteration subproblem solver  is presented.   The convergence   of  the proposed method is established in Section \ref{convergence}. Numerical results on  compressed modes problems in physics and sparse PCA are reported in Section \ref{sec-ex}. Finally,  Section \ref{sec-con}   concludes this paper by some remarks.  

\section{Related works}\label{rwks}
We summarize some related works for  nonsmooth optimization problem on manifold in this section.  The existing results mainly focused on  two classes of  nonsmooth manifold optimization problem: nonsmooth optimization problem with locally Lipschitz objective function, and structured optimization problem having the form of problem \eqref{problem}.  

Grohs and Hosseini \cite{Grohs2015} proposed the $\epsilon$-subgradient algorithm for minimizing a locally Lipschitz function on Riemannian manifold. By utilizing $\epsilon$-subgradient-oriented descent directions and the generalized Wolfe line-search  on Riemannian manifold, Hosseini, Huang and Yousefpour  \cite{doi:10.1137/16M1108145} presented  a nonsmooth Riemannian line search algorithm and  established the convergence to a stationary point.  Grohs \cite{Grohs2016Nonsmooth} presented  a nonsmooth trust region algorithm for minimizing locally Lipschitz  objective function on Riemannian manifold.  The iteration complexity of these subgradient algorithms was also investigated in \cite{Bento2017Iteration} and \cite{ferreira2019iteration}. In \cite{doi:10.1137/16M1069298} and \cite{burke2018gradient}, the authors proposed the Riemannian gradient sampling algorithms. At each iteration of  these  Riemannian gradient sampling methods,  the subdifferential of the objective function  is approximated by the convex hull of transported  gradients of  nearby points, and the nearby points are randomly generated in the tangent space of the current iterate. 

Some proximal point algorithms on Riemannian manifold were   investigated in the recent.  Bento, Ferreira and Melo \cite{Bento2017Iteration} analyzed the iteration complexity of a proximal point algorithm on Hadamard manifold  having non-positive sectional curvature.  Bento, et al \cite{deCarvalhoBento2016} gave the full convergence for any bounded sequence generated by the proximal point method,   without assumption on  the sign of the sectional curvature on manifold.   The  Kurdyka-\L ojasiewicz inequality on Riemannian manifold is a powerful tool for convergence analysis of optimization methods on manifold.  Bento, et al \cite{bento2011convergence} analyzed the full convergence of a steepest descent method and  a proximal point method via Kurdyka-\L ojasiewicz inequality. Seyedehsomayeh \cite{hosseini2015convergence} proposed a subgradient-oriented descent method and proved that,  if the objective function has the Kurdyka-\L ojasiewicz property,   the  sequence generated by the subgradient-oriented descent method  converges to a singular critical point.

By  a separable reformulation of problem \eqref{problem}, the variable involving Riemannian manifold constraint and that one involving nonsmooth term could be handled separately. To do so,  it  results in two tractable subproblems. Based on this idea,  Lai, et al \cite{Lai2014A} proposed a splitting of orthogonality constraints (SOC) method  for a special case of problem \eqref{problem}, in which  $f\equiv 0$ and $A = I$, $\mathcal{M}$ is a Stiefel manifold. That is 
\begin{equation}\label{socsub}\left\{
\begin{split}
  \min_X ~& g(X), \\
  \mbox{s.t.} ~& X\in\mathcal{M}.
\end{split}\right.
\end{equation}
To solve problem \eqref{socsub}, the SOC method considered the following separable reformulation:
\begin{equation}\left\{
\begin{split}
  \min_{X,Y}~ & g(Y), \\
  \mbox{s.t.} ~& X\in\mathcal{M}, ~X = Y.
\end{split}\right.
\end{equation}
The associated partial augmented Lagrangian function is
\begin{equation}
\mathcal{L}_{\beta} := g(Y) - \left<\Lambda, X-Y\right> + \frac{\beta}{2}\|X-Y\|_F^2
\end{equation}
where $\Lambda$ is the Lagrangian multiplier, and $\beta$ is a penalty parameter. The SOC method updates iterate via
\[\label{socit}
  \left\{
  \begin{array}{lc}
    X^{k+1} = \arg\min\limits_{X\in\mathcal{M}} \frac{\beta}{2} \|X - Y^k - \frac{1}{\beta} \Lambda^k\|_F^2, \\
    Y^{k+1} =\arg\min g(Y) + \frac{\beta}{2} \|X^{k+1} - Y - \frac{1}{\beta} \Lambda^k\|_F^2, \\
    \Lambda^{k+1} = \Lambda^k - \beta (X^{k+1} - Y^{k+1}).
  \end{array}
  \right.
\]
The  $X$-subproblem  is   ``easy"   via projection on $\mathcal{M}$,  and  the   $Y$ -subproblem  is often structured  in real applications.

Chen, et al \cite{chen2016augmented} proposed a proximal alternating minimization augmented Lagrangian (PAMAL) method of multipliers  for  problem \eqref{problem} with $A = I$ and $\mathcal{M} = St_n$. Specifically, the PAMAL method first reformulates the problem to:
\begin{equation}\label{pamal}\left\{
\begin{split}
  \min\limits_{X,Y,Q}~ & f(Y) + h(Q), \\
  \mbox{s.t.}~ & X = Y,  X = Q,  X\in\mathcal{M}.
  \end{split}\right.
\end{equation}
Then it considers the augmented Lagrangian method of multipliers framework  aiming to obtain the  solution for the  jointed  variable $(X,Y,Q)$ at each iteration. The iterate is produced  by
\begin{equation}\label{pamalit}
\left\{
  \begin{split}
     (X^{k+1},Y^{k+1},Q^{k+1}) &= \arg\min_{X,Y,Q}\mathcal{L}_{\beta}(X,Y,Q;\Lambda_1^k,\Lambda_2^k), \\
     \Lambda_1^{k+1} & = \Lambda_1^{k} - \beta(X^{k+1} - Y^{k+1}), \\
     \Lambda_2^{k+1} & = \Lambda_2^k - \beta(X^{k+1} - Q^{k+1}),
  \end{split}
  \right.
\end{equation}
where $\mathcal{L}_{\beta}$ is the augmented Lagrangian function associated to \eqref{pamal}. The subproblem on the jointed variable $(X, Y, Q)$  is intractable, hence the authors proposed a proximal alternating minimization method to handle it.    Hong, et al \cite{Hong2017Nonconvex} considered a  more general form where $\mathcal{M}$ is the  generalized orthogonal constraint, and proposed a PAMAL-type algorithm in which a proximal alternating linearized minimization method was used for iteration subproblem. 

Kovnatsky, et al \cite{Kovna2015} proposed a manifold ADMM (MADMM)  for a general manifold  optimization problem as follows
\[\label{madmm}\left\{
\begin{split}
  \min_{X,Y} ~& f(X) + g(Y) \\
  \mbox{s.t.} ~& AX = Y, X\in\mathcal{M}.
  \end{split}\right.
\]
The associated partial augmented Lagrangian function is   $$\mathcal{L}_{\beta}(X, Y;\Lambda) : = f(X) + g(Y) - \left<\Lambda,AX-Y\right> + \frac{\beta}{2} \|AX-Y\|_F^2.$$The MADMM has the iterate as follows
 \begin{equation}
   \left\{
   \begin{split}
      X^{k+1} = &\arg\min_{X\in\mathcal{M}}\mathcal{L}_{\beta}(X,Y^{k},\Lambda^k) \\
      Y^{k+1} = &\arg\min_{Y}\mathcal{L}_{\beta}(X^{k+1},Y,\Lambda^k) \\
      \Lambda^{k+1} =& \Lambda^k - \beta(AX^{k+1} - Y^{k+1})
   \end{split}
   \right.
 \end{equation}
More recently, Chen, et al \cite{chen2018proximal} proposed a manifold proximal gradient method (ManPG) for problem \eqref{problem} with $A = I$, i.e.
\begin{equation}
  \min_X f(X) + g(X), \ \ \mbox{s.t. } X\in\mathcal{M}
\end{equation}
At the $k$-th iteration, the search direction $D^k$ of ManPG is obtained by
\begin{equation}\label{D-sub}\left\{
\begin{split}
  \min_D ~&  \left<D,\mbox{grad}f(X^k)\right> + \frac{\beta}{2}\|D\|_F^2 + g(X^k+D), \\
  \mbox{s.t.} ~& D\in T_{X^k}\mathcal{M},
  \end{split}\right.
\end{equation}
where $D\in T_{X^k}\mathcal{M}$ can be represented by a linear system $\mathcal{A}_k(D) = 0$ . The subproblem \eqref{D-sub} is solved by applying the semi-smooth Newton method to the KKT system.  The next iterate $X^{k+1}$ is then obtained  by $$X^{k+1} = R_{X^k}(\alpha_k D^k).$$

\section{Preliminaries} \label{preli}

\subsection{Riemannian manifold optimization} 

Let $\mathcal{M}$ be  a smooth manifold, and $\mathbb{E}$ be the Euclidean space. The tangent space of $\mathcal{M}$ at $x\in\mathcal{M}$ is denoted by $T_x\mathcal{M}$. A Riemannian manifold $(\mathcal{M},\left<\cdot,\cdot\right>)$ is a smooth manifold equipped with  inner product $\left<\cdot,\cdot\right>_x$ on each point $x\in \mathcal{M}$. Let $(U,\varphi)$ be a chart, where $U$ is an open set with $x\in U \subset\mathcal{M} $ and $\varphi$ is a homeomorphism between $U$ and  open set $\varphi(U)\subset \mathbb{E}$. Given a  Riemannian manifold $\mathcal{M}$, the chart  exists at each point $x\in\mathcal{M}$.
\begin{definition}[Riemannian Gradient]
Riemannian gradient, denoted by  $\mbox{grad} f(x) \in T_x\mathcal{M}$,  is  the unique tangent vector satisfying
\begin{equation}\label{grad}
  \left< \mbox{grad}f(x), \xi \right>_x = df(x)[\xi], ~\forall ~\xi\in T_x\mathcal{M}.
\end{equation}
\end{definition}
If $\mathcal{M}$ is an embedded manifold of  $\mathbb{E}$, the Riemannian metric between $u,v\in T_x\mathcal{M}$ could be introduced by  an inner product in $\mathbb{E}$, i.e. $\left<u, v\right>_x = \left<u, v\right>$, where the later is  classical  inner product in $\mathbb{E}$. In the sense, we have
\begin{equation}
  \mbox{grad}f(x) = \mbox{Proj}_{T_x\mathcal{M}}(\nabla f(x))
\end{equation}
where $\nabla f(x)$ is  the gradient in $\mathbb{E}$, $\mbox{Proj}_{T_x \mathcal{M}}$ is a  projection   on   tangent space $T_x \mathcal{M}$.
\begin{definition}[Riemannian Hessian]
Given a smooth function $f:\mathcal{M}\rightarrow \mathbb{R}$, the Riemannian Hessian of $f$ at  $x$ in $\mathcal{M}$ is  linear mapping $\mbox{Hess}f(x)$ of $T_x\mathcal{M}$ into itself,  defined by
\begin{equation}
  \mbox{Hess}f(x)[\xi_x]  = \nabla_{\xi_x}\mbox{grad}f(x)
\end{equation}
for   $\forall ~\xi_x \in T_x\mathcal{M}$, where $\nabla$ is the Riemannian connection on $\mathcal{M}$.
\end{definition}

\begin{definition}[Retraction]\label{retr}
  A retraction on  manifold $\mathcal{M}$ is a smooth mapping $R: T\mathcal{M}\rightarrow \mathcal{M}$ which has the following properties:  let $R_x: T_x\mathcal{M} \rightarrow \mathcal{M}$  be the restriction of  $R$ to $T_x\mathcal{M}$, then
\begin{itemize}
  \item $R_x(0_x) = x$, where $0_x$ is zero element of $T_x\mathcal{M}$
  \item $dR_x(0_x) = id_{T_x\mathcal{M}}$,  where $id_{T_x\mathcal{M}}$ is the identity mapping on $T_x\mathcal{M}$
\end{itemize}
\end{definition}

\begin{definition}[Vector Transport]
  The vector transport $\mathcal{T}$ is a smooth mapping with
  \begin{equation}
    T\mathcal{M}\oplus T\mathcal{M} \rightarrow T\mathcal{M}:(\eta_x,\xi_x)\mapsto \mathcal{T}_{\eta_x}(\xi_x)\in T\mathcal{M},  \forall ~x\in\mathcal{M},
  \end{equation}
   where  $\mathcal{T}$ satisfies that
  \begin{itemize}
    \item $\mathcal{T}_{0_x}\xi_x = \xi_x$ holds for   $\forall ~\xi_x\in T_x\mathcal{M}$;
    \item $\mathcal{T}_{\eta_x}(a\xi_x + b\zeta_x)  = a \mathcal{T}_{\eta_x}(\xi_x) + b\mathcal{T}_{\eta_x}(\zeta_x)$.
  \end{itemize}
\end{definition}

\begin{definition}[The Clarke subdifferential on Riemannian manifold]
  For a locally Lipschitz continuous function $f$ on $\mathcal{M}$, the Riemannian generalized directional derivative of $f$ at $x\in\mathcal{M}$ in direction $v\in T_x\mathcal{M}$  is given by 
    \begin{equation}
    f^{\circ}(x;v) = \lim_{y\rightarrow x}\sup_{t\downarrow 0} \frac{f\circ \varphi^{-1} (\varphi(y) + t \mbox{D}\varphi(y)[v]) - f\circ \varphi^{-1} (\varphi(y))}{t},
  \end{equation}
  where $(\varphi,U)$ is coordinate chart at $x$. The generalized gradient or the Clarke subdifferential of $f$ at $x\in\mathcal{M}$  is
  \begin{equation}
    \partial f(x) = \{\xi\in  T_x\mathcal{M} : \left<\xi,v\right>_x \leq f^{\circ} (x;v), \forall v\in T_x \mathcal{M}\}.
  \end{equation}
\end{definition}
Consider a Riemannian manifold minimization problem
\begin{equation}\label{constraint problem}\left\{
  \begin{split}
     \min_x  ~~& f(x) \\
     \mbox{s.t. }~~ & c_i(x) = 0 ,  i=1,\cdots,m,\\
                    & x\in\mathcal{M}.
  \end{split}\right.
\end{equation}
Let $\Omega := \{x\in\mathcal{M}:c_i(x) = 0,i=1\cdots,m\}$. Given $x^* \in \Omega$, assume that the Linear Independent Constraint Qualification (LICQ) holds at $x^*$, then   normal cone $\mathcal{N}_{\Omega}(x^*)$ is   \cite{Yang2014}:
\begin{equation}
  \mathcal{N}_{\Omega}(x^*) = \left\{\left.\sum_{i=1}^{m}\lambda_i\mbox{grad}c_i(x^*) \right| \lambda\in\mathbb{R}^m\right\}
\end{equation}
For the first-oder optimality condition of problem \eqref{constraint problem}, we have
\begin{lemma}[\cite{Zhang2019},Proposition 2.7]\label{optimality}
 If  $x^*\in\Omega$, and 
   \begin{equation}
    \partial f(x^*) \cap (-\mathcal{N}_{\Omega}(x^*)) \neq \emptyset,
  \end{equation}
  then $x^*$ is a stationary solution of  problem \eqref{constraint problem}.
\end{lemma}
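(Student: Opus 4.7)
The plan is to unfold the definition of stationarity for \eqref{constraint problem} in the sense of Clarke on a Riemannian manifold, namely that $x^*$ is stationary provided $f^\circ(x^*; v) \geq 0$ for every $v$ in the tangent cone $T_{x^*}\Omega$, and then verify this inequality directly from the hypothesis.

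First I would use the LICQ assumption to obtain an explicit description of the tangent cone. Since $\Omega$ is cut out from $\mathcal{M}$ by the independent equality constraints $c_i$, a linearisation at $x^*$ yields
$$T_{x^*}\Omega = \left\{ v \in T_{x^*}\mathcal{M} : \langle v, \mbox{grad}\, c_i(x^*)\rangle_{x^*} = 0,\ i=1,\ldots,m\right\}.$$
This subspace is exactly the annihilator in $T_{x^*}\mathcal{M}$ of the normal cone $\mathcal{N}_\Omega(x^*)$ as characterised in the excerpt.

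Next, I would exploit the hypothesis. Pick any $\xi \in \partial f(x^*) \cap (-\mathcal{N}_\Omega(x^*))$ and write $-\xi = \sum_{i=1}^m \lambda_i \mbox{grad}\, c_i(x^*)$ for some $\lambda \in \mathbb{R}^m$. Pairing both sides against an arbitrary $v \in T_{x^*}\Omega$ and using the tangent-cone characterisation immediately yields $\langle \xi, v\rangle_{x^*} = 0$. On the other hand, because $\xi \in \partial f(x^*)$, the defining inequality of the Riemannian Clarke subdifferential gives $\langle \xi, v\rangle_{x^*} \leq f^\circ(x^*; v)$. Combining these two facts produces $f^\circ(x^*; v) \geq 0$ for every $v \in T_{x^*}\Omega$, which is precisely the Clarke stationarity condition for \eqref{constraint problem}.

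The main obstacle I anticipate is not the algebraic step itself but justifying, on a Riemannian manifold, the identification of $T_{x^*}\Omega$ under LICQ with the annihilator of $\mathcal{N}_\Omega(x^*)$. This typically requires a chart-based argument in which $\varphi$ pulls the constraints $c_i\circ \varphi^{-1}$ back to a smooth map on an open subset of $\mathbb{E}$, followed by the classical implicit function theorem to realise each admissible tangent vector as the velocity at $0$ of a feasible curve in $\Omega$. Once this identification is accepted (as is routine when invoking \cite{Yang2014}), the remainder of the proof is a one-line dualisation.
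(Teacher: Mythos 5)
The paper does not actually prove this lemma: it is imported verbatim as Proposition 2.7 of \cite{Zhang2019}, so there is no in-paper argument to compare against. Your blind proof is a correct and self-contained derivation of the standard fact behind that citation. The dualisation step is sound: for $\xi \in \partial f(x^*) \cap (-\mathcal{N}_{\Omega}(x^*))$, writing $-\xi = \sum_i \lambda_i \,\mbox{grad}\, c_i(x^*)$ and pairing with any $v$ in the linearised tangent space gives $\langle \xi, v\rangle_{x^*} = 0$, and the defining inequality of the Clarke subdifferential then yields $f^{\circ}(x^*;v) \ge 0$. Two caveats are worth making explicit. First, the lemma as stated in the paper does not include LICQ among its hypotheses; LICQ appears only in the surrounding text as the condition under which $\mathcal{N}_{\Omega}(x^*)$ has the span representation, and your identification of $T_{x^*}\Omega$ with the annihilator of that span genuinely needs it (via the chart-plus-implicit-function-theorem argument you sketch), so your proof really establishes the lemma under the standing LICQ assumption rather than unconditionally. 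Second, the paper never defines ``stationary solution'' of \eqref{constraint problem}; you adopt the directional reading $f^{\circ}(x^*;v)\ge 0$ for all $v\in T_{x^*}\Omega$, which is the natural nontrivial one (the alternative reading $0\in\partial f(x^*)+\mathcal{N}_{\Omega}(x^*)$ would make the lemma a tautology), but since the target notion is unstated you should flag which definition you are proving. With those two points made explicit, the argument is complete and arguably more informative than the bare citation the paper relies on.
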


\subsection{Proximal operator and retraction-smooth}
For a proper, convex and low semicontinuous function $g:\mathbb{E}\rightarrow\mathbb{R}$, the proximal operator with  parameter $\mu\geq 0$, denoted by $\mbox{prox}_{\mu g}$, is defined by
\begin{equation}\label{proximal}
  \mbox{prox}_{\mu g} (v)  :=\arg\min_x \{g(x) + \frac{1}{2\mu} \|x-v\|^2\}.
\end{equation}
 The associated Moreau envelope is   a function $M: \mathbb{E}\rightarrow\mathbb{R}$  given by
\begin{equation}\label{More}
\begin{split}
  M_{\mu g} (v) :& = \min_x \{g(x) + \frac{1}{2\mu} \|x-v\|^2\} \\
                 & = g(\mbox{prox}_{\mu g} (v)) + \frac{1}{2\mu} \|\mbox{prox}_{\mu g} (v)-v\|^2.
  \end{split}
\end{equation}
The Moreau envelope is a continuously differentiable function, even when $g$ is not.  
\begin{lemma}[Theorem 6.60 in \cite{beck2017first}]\label{lem-morea}
  Let $g:\mathbb{E}\rightarrow\mathbb{R}$ be a proper closed and convex function, and $\mu\geq 0$. Then $M_{\mu g}$ is $\frac{1}{\mu}$-smooth in $\mathbb{E}$, and   for $\forall ~v\in\mathbb{E}$ one has
   \begin{equation}\label{Morediff}
  \nabla M_{\mu g}(v) = \frac{1}{\mu}(v - \mbox{prox}_{\mu g} (v)).
\end{equation}
\end{lemma}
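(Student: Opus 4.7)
The plan is to derive both the gradient formula and the Lipschitz constant by combining the first-order optimality of the inner prox problem with the (firm) non-expansiveness of $\mbox{prox}_{\mu g}$. The argument breaks into three steps: well-posedness of the prox map, a pair of envelope-style inequalities bracketing the linearization remainder of $M_{\mu g}$, and a firm-non-expansiveness estimate that yields the sharp constant $1/\mu$.

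First I would observe that, for each fixed $v$, the inner function $x\mapsto g(x)+\frac{1}{2\mu}\|x-v\|^2$ is proper, closed, and $1/\mu$-strongly convex, so the minimizer $u(v):=\mbox{prox}_{\mu g}(v)$ exists and is unique, and the first-order optimality condition gives $\frac{1}{\mu}(v-u(v))\in\partial g(u(v))$. This identifies the natural candidate for $\nabla M_{\mu g}(v)$ and will be the algebraic engine for everything that follows.

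Next I would establish Fr\'echet differentiability by a two-sided bracketing. Using $u(v)$ as a feasible point for the $w$-problem yields the upper estimate
\begin{equation*}
M_{\mu g}(w)-M_{\mu g}(v)\leq \frac{1}{\mu}\langle v-u(v),\, w-v\rangle+\frac{1}{2\mu}\|w-v\|^2,
\end{equation*}
and symmetrically, feeding $u(w)$ into the $v$-problem produces a matching lower bound. Subtracting the linear term $\frac{1}{\mu}\langle v-u(v),w-v\rangle$ from both inequalities and using that $\mbox{prox}_{\mu g}$ is non-expansive (so $\|u(w)-u(v)\|\leq\|w-v\|$) forces the remainder to be $O(\|w-v\|^2)$, which is precisely the Fr\'echet differentiability statement together with the formula $\nabla M_{\mu g}(v)=\frac{1}{\mu}(v-u(v))$.

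The main obstacle is sharpening the Lipschitz constant from the easy bound $2/\mu$, obtained via the triangle inequality and 1-Lipschitzness of the prox, down to the sharp $1/\mu$. For this I would invoke firm non-expansiveness: writing the optimality condition at two points $v_1,v_2$ and using monotonicity of $\partial g$ yields $\|u(v_1)-u(v_2)\|^2\leq\langle u(v_1)-u(v_2),\,v_1-v_2\rangle$. Expanding $\|(v_1-u(v_1))-(v_2-u(v_2))\|^2$ and substituting this monotonicity inequality makes a cross term collapse, producing
\begin{equation*}
\|(v_1-u(v_1))-(v_2-u(v_2))\|^2\leq \|v_1-v_2\|^2-\|u(v_1)-u(v_2)\|^2\leq \|v_1-v_2\|^2.
\end{equation*}
Dividing by $\mu$ then gives the claimed $1/\mu$-Lipschitz bound on $\nabla M_{\mu g}$ and completes the proof.
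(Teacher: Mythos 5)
Your proof is correct. Note, however, that the paper does not prove this lemma at all: it is quoted as Theorem 6.60 of the cited reference \cite{beck2017first} and used as a black box, so there is no in-paper argument to compare against. Your three steps (strong convexity of the inner problem giving existence, uniqueness and the optimality condition $\frac{1}{\mu}(v-\mbox{prox}_{\mu g}(v))\in\partial g(\mbox{prox}_{\mu g}(v))$; the two-sided bracketing of $M_{\mu g}(w)-M_{\mu g}(v)$ by evaluating each problem at the other's minimizer; and firm non-expansiveness from monotonicity of $\partial g$ to get the sharp constant $1/\mu$) are exactly the standard route and each step checks out — the bracketing gives $|M_{\mu g}(w)-M_{\mu g}(v)-\frac{1}{\mu}\langle v-u(v),w-v\rangle|\leq\frac{1}{2\mu}\|w-v\|^2$, which is more than enough for Fr\'echet differentiability. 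Two small presentational points: you invoke non-expansiveness of the prox in the bracketing step before deriving it in the final step, so the logical order should be reversed (or the firm non-expansiveness inequality stated up front); and the hypothesis $\mu\geq 0$ in the statement is a typo in the paper — your argument, like the original theorem, needs $\mu>0$ for the strong convexity and the division by $\mu$.
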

Lemma \ref{lem-morea} states that,  the Moreau envelope is   continuously differentiable   in Euclidean space $\mathbb{E}$.  Next we will show the relationship between Retraction smoothness in submanifold of Euclidean space and smoothness in Euclidean space.
\begin{definition}[Retraction-Smooth]
  A function $f:\mathcal{M}\rightarrow\mathbb{R}$ is said to be retraction $\ell$-smooth if,  for   $\forall~x, y\in\mathcal{M}$ it holds that
  \begin{equation}
    f(y) \leq f(x) + \left<\mbox{grad}f(x), \xi\right>_x + \frac{\ell}{2}\|\xi\|_x^2,
  \end{equation}
  where $\xi\in T_x\mathcal{M}$ and $R_x(\xi) = y$.
\end{definition}
Let $\mathcal{M}$ be a Riemannian submanifold of $\mathbb{E}$. The following lemma states that, if $f:\mathbb{R}^n \rightarrow \mathbb{R}$ has Lipschitz continuous gradient, then $f$ is also retraction smooth on $\mathcal{M}$.
\begin{lemma}\label{retr-sm}[Lemma 4 in \cite{Boumal2016Global}]\label{retract-euclidean}
  Let $\mathbb{E}$ be a Euclidean space (for example, $\mathbb{E} = \mathbb{R}^n$) and  $\mathcal{M}$ be a compact Riemannian submanifold of $\mathbb{E}$. 
  If $f:\mathbb{E}\rightarrow \mathbb{R}$ has Lipschitz continuous gradient in the convex hull of $\mathcal{M}$, then  there  exists a positive constant $\ell_g$  such that 
  \begin{equation}\label{lemma4}
    f(R_{x_k}(\eta)) \leq f(x_k) + \left<\eta,\mbox{grad}f(x_k)\right> + \frac{\ell_g}{2}\|\eta\|^2
  \end{equation}
  holds at $\forall~\eta\in T_{x_k}\mathcal{M}$.
\end{lemma}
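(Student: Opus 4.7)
The plan is to reduce the Riemannian inequality to the standard Euclidean descent lemma by exploiting two facts: first, that any point of the form $R_{x_k}(\eta)$ lies in $\mathcal{M}$ (and hence in its convex hull, where $\nabla f$ is Lipschitz); second, that on a compact submanifold the retraction is, up to a uniform quadratic error, the identity on tangent vectors.

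Concretely, I would first establish two retraction estimates that are uniform in the base point. Because $\mathcal{M}$ is compact, the unit tangent bundle $\{(x,\eta) : x\in\mathcal{M},\ \|\eta\|\le 1\}$ is compact, and the smooth maps $(x,\eta)\mapsto \|R_x(\eta)-x\|/\|\eta\|$ and $(x,\eta)\mapsto \|R_x(\eta)-x-\eta\|/\|\eta\|^2$ extend continuously (using $R_x(0)=x$ and $dR_x(0)=\mathrm{id}$) so they attain finite maxima. Hence there exist constants $\alpha_1,\alpha_2>0$ such that for all $x\in\mathcal{M}$ and all $\eta\in T_x\mathcal{M}$ (in a uniform neighborhood of $0$, and then extended by a boundedness argument for large $\eta$ using compactness of $\mathcal{M}\subset\mathbb{E}$),
\begin{equation}
\|R_x(\eta)-x\|\le \alpha_1\|\eta\|,\qquad \|R_x(\eta)-x-\eta\|\le \alpha_2\|\eta\|^2.
\end{equation}
I would also note that by compactness and continuity, $M:=\sup_{x\in\mathcal{M}}\|\nabla f(x)\|<\infty$.

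Next, since the convex hull of $\mathcal{M}$ is convex and $\nabla f$ is $L$-Lipschitz on it, and since both $x_k$ and $R_{x_k}(\eta)$ lie in $\mathcal{M}$, the classical Euclidean descent lemma gives
\begin{equation}
f(R_{x_k}(\eta))\le f(x_k)+\langle \nabla f(x_k),\,R_{x_k}(\eta)-x_k\rangle+\frac{L}{2}\|R_{x_k}(\eta)-x_k\|^2.
\end{equation}
I would then split the linear term by writing $R_{x_k}(\eta)-x_k=\eta+\bigl(R_{x_k}(\eta)-x_k-\eta\bigr)$. Because $\eta\in T_{x_k}\mathcal{M}$ and $\mathrm{grad}\,f(x_k)=\mathrm{Proj}_{T_{x_k}\mathcal{M}}(\nabla f(x_k))$, we have $\langle \nabla f(x_k),\eta\rangle=\langle \mathrm{grad}\,f(x_k),\eta\rangle$, so the inner product reduces to $\langle \mathrm{grad}\,f(x_k),\eta\rangle$ plus an error controlled by Cauchy--Schwarz, $M$, and $\alpha_2\|\eta\|^2$. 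Bounding the quadratic term by $L\alpha_1^2\|\eta\|^2/2$ and collecting constants yields the claim with $\ell_g:=L\alpha_1^2+2M\alpha_2$.

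The main obstacle is establishing the uniform retraction bounds $\alpha_1,\alpha_2$ across the whole tangent bundle rather than merely a neighborhood of the zero section. Locally they follow immediately from the Taylor expansion implicit in Definition~\ref{retr}, but to use them for arbitrary $\eta\in T_{x_k}\mathcal{M}$ one has to combine a local quadratic estimate near the zero section with a global bound obtained from compactness of $\mathcal{M}\subset\mathbb{E}$ (so that $\|R_x(\eta)-x\|$ is trivially bounded by the diameter and hence can be absorbed into $\alpha_1\|\eta\|$ once $\|\eta\|$ is bounded below). This patching is the only nonroutine step; everything else is a bookkeeping computation using the descent lemma and the projection characterization of $\mathrm{grad}\,f$.
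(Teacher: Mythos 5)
Your proposal is correct and follows essentially the same route as the paper's proof: apply the Euclidean descent lemma between $x_k$ and $R_{x_k}(\eta)$, split the inner product via $R_{x_k}(\eta)-x_k=\eta+(R_{x_k}(\eta)-x_k-\eta)$ together with the projection characterization of $\mathrm{grad}\,f$, and absorb the error terms using the compactness bounds $\|R_x(\eta)-x\|\le\alpha\|\eta\|$, $\|R_x(\eta)-x-\eta\|\le\beta\|\eta\|^2$ and $\|\nabla f(x)\|\le G$. The only difference is that you spell out the patching argument (local Taylor estimate near the zero section plus a diameter bound for large $\eta$) needed to make the retraction constants uniform over the whole tangent bundle, a step the paper simply asserts from compactness.
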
 Lemma \ref{retr-sm} was proved in \cite{Boumal2016Global}. For the sake of completeness, we give  a proof as follows.
\begin{proof}
  By Lipschitz continuity, $\nabla f$ is Lipschitz  along any line segment in $\mathbb{E}$ jointing $x$ and $y$. Hence, there exists $\ell>0$ such that
\[\label{lips}
    f(y) \leq f(x) + \left<\nabla f(x), y-x\right> + \frac{\ell}{2}\|y-x\|^2,~ \forall x, y\in\mathcal{M}.
 \]
It also holds if $y = R_x(\eta), ~\forall \eta\in T_x\mathcal{M}$. 
Since $\mbox{grad}f(x)$ is a orthogonal projection of $\nabla f(x)$ on $T_x\mathcal{M}$, we have
  \begin{equation}\label{gradf}
  \begin{split}
    \left<\nabla f(x), R_x(\eta) - x\right> & = \left<\nabla f(x), \eta+ R_x(\eta) - x -\eta\right> \\
                                                      & = \left<\mbox{grad} f(x), \eta\right> + \left<\nabla f(x),  R_x(\eta) - x -\eta\right>.
      \end{split}
  \end{equation}
It is easy to deduce from \eqref{lips} and \eqref{gradf} that
  \begin{equation}
    f(R_x(\eta)) \leq f(x) + \left<\mbox{grad} f(x), \eta\right>  + \frac{\ell}{2} \|R_x(\eta) - x\|^2 + \|\nabla f(x)\|\|R_x(\eta)-x-\eta\|. \nn
  \end{equation}
  Since $\nabla f(x)$ is continuous on  compact set $\mathcal{M}$, there exists $G>0$  such that $\|\nabla f(x)\|\leq G, ~\forall~x\in\mathcal{M}$.    By Definition \ref{retr} and the compactness of manifold, there exists   $\alpha,\beta \geq 0$ such that, for all $x\in\mathcal{M}$ and all $\eta\in T_x\mathcal{M}$, we have
  \begin{equation}
     \|R_x(\eta) - x\| \leq \alpha\|\eta\|^2, \mbox{and}~
       \|R_x(\eta) - x -\eta\| \leq \beta\|\eta\|^2. \nn
  \end{equation}
Hence
    \begin{equation}
    f(R_x(\eta)) \leq f(x) + \left<\mbox{grad} f(x), \eta\right>  + \left(\frac{\ell}{2}\alpha^2 + G\beta \right)\|\eta\|^2. \nn
  \end{equation}
  Let $\ell_g = \left(\frac{\ell}{2}\alpha^2 + G\beta \right)$, we have \eqref{lemma4} and complete the proof.
  \end{proof}

\section{The proposed method} \label{paml}
 
\subsection{Problem reformulation}

For regularity, we need the following assumptions on problem \eqref{problem}.
\begin{assumption}\label{assum}
\begin{itemize}
\item[]
\item[A.] $\mathcal{M}$ is a compact Riemannian submanifold embedded in Euclidean space $\mathbb{E}$;
\item[B.] $f$ is smooth but  not necessary convex,  
 $g$ is  a nonsmooth  convex function on $\mathbb{E}$, $A\in \mathbb{R}^{d\times n}$ and $\partial g(Y)$ is uniformly bounded for   $ \forall Y\in \mathbb{R}^{d\times r}$, where $\partial g(Y)$ is   subdifferential of $g$ at $Y$.
\end{itemize}
\end{assumption}

By introducing auxiliary variable $Y = AX $, problem \eqref{problem} can be reformulated to
\begin{equation}\label{problem2}\left\{
\begin{split}\min_{X, Y} ~~f(X) + g(Y) \\
  \mbox{s.t. } AX = Y, ~X\in\mathcal{M}.
  \end{split}\right.
\end{equation}
The partial Lagrangian function associated to problem \eqref{problem2} is
\begin{equation}\label{Lagrangian}
  L(X,Y; Z): = f(X) + g(Y) - \left<Z, AX-Y \right>
\end{equation}
By  Lemma \ref{optimality},   the KKT system  of  problem \eqref{problem2} is  as follows:
\begin{proposition}
  Suppose in  problem \eqref{problem2} that  $f$ is  smooth with Lipschitz continuous gradient and $g$ is convex and locally Lipschitz continuous. Then,  $(X^*, Y^*)$  satisfies the KKT conditions if there exists a Lagrange multiplier $Z^*$ such that
\begin{equation}\label{KKT}
    \left\{
    \begin{array}{rl}
      0 \in& \mbox{Proj}_{T_{X^*}\mathcal{M}} (\nabla f(X^*) -A^TZ^*) , \\
      0 \in &\partial g(Y^*) + Z^* ,\\
      &AX^* = Y^*.
    \end{array}
    \right.
  \end{equation}
\end{proposition}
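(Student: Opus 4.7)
The plan is to apply Lemma \ref{optimality} to problem \eqref{problem2}, viewed as a constrained optimization on the product manifold $\widetilde{\mathcal{M}} := \mathcal{M} \times \mathbb{R}^{d\times r}$, subject to the scalar equality constraints $c_{ij}(X,Y) := (AX)_{ij} - Y_{ij} = 0$ for $i=1,\ldots,d$ and $j=1,\ldots,r$. Since $\mathcal{M}$ is a Riemannian submanifold of $\mathbb{E}$ and $\mathbb{R}^{d\times r}$ is Euclidean, the product carries a natural Riemannian structure with tangent space $T_{(X,Y)}\widetilde{\mathcal{M}} = T_X\mathcal{M} \times \mathbb{R}^{d \times r}$, and the objective $F(X,Y) := f(X) + g(Y)$ is locally Lipschitz on $\widetilde{\mathcal{M}}$.

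First I would decompose the Clarke subdifferential of $F$ at $(X^*, Y^*)$. Because $f$ is $C^1$ and $g$ is convex, both summands are Clarke regular on their respective factors, so the subdifferential splits as
\begin{equation*}
\partial F(X^*, Y^*) = \{(\mbox{grad}\, f(X^*), \xi) : \xi \in \partial g(Y^*)\},
\end{equation*}
with $\mbox{grad}\, f(X^*) = \mbox{Proj}_{T_{X^*}\mathcal{M}}(\nabla f(X^*))$. Next I compute $\mathcal{N}_\Omega(X^*, Y^*)$ via the formula stated just before Lemma \ref{optimality}. The Euclidean gradient of $c_{ij}$ on $\mathbb{E}\times \mathbb{R}^{d\times r}$ is $(A^T E_{ij}, -E_{ij})$, where $E_{ij}$ denotes the standard basis matrix, so the Riemannian gradient on $\widetilde{\mathcal{M}}$ is $(\mbox{Proj}_{T_{X^*}\mathcal{M}}(A^T E_{ij}), -E_{ij})$. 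The LICQ is automatic because the $Y$-components $\{-E_{ij}\}$ are linearly independent, hence
\begin{equation*}
\mathcal{N}_\Omega(X^*, Y^*) = \{(\mbox{Proj}_{T_{X^*}\mathcal{M}}(A^T \Lambda), -\Lambda) : \Lambda \in \mathbb{R}^{d \times r}\}.
\end{equation*}

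Finally, Lemma \ref{optimality} tells us $(X^*, Y^*)$ is stationary iff there exist $\xi \in \partial g(Y^*)$ and $\Lambda \in \mathbb{R}^{d \times r}$ with
\begin{equation*}
(\mbox{Proj}_{T_{X^*}\mathcal{M}}(\nabla f(X^*)), \xi) = -(\mbox{Proj}_{T_{X^*}\mathcal{M}}(A^T \Lambda), -\Lambda).
\end{equation*}
Setting $Z^* := -\Lambda$ and reading off the two components immediately yields the first two KKT relations, while $AX^* = Y^*$ is simply the primal feasibility $(X^*,Y^*) \in \Omega$. The main technical point is justifying the product-manifold decomposition of the Clarke subdifferential, which relies on the Clarke regularity of the two summands (smoothness of $f$ and convexity of $g$); verifying LICQ and performing the final substitution are routine given the explicit linearity of the constraint in $Y$.
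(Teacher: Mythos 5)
Your proposal is correct and follows exactly the route the paper intends: the paper states this proposition with no written proof beyond the remark that it follows from Lemma \ref{optimality} applied to the reformulation on the product manifold, and your argument fills in precisely those details (splitting the Clarke subdifferential of $f(X)+g(Y)$, computing the normal cone of the constraint set via the pre-lemma formula, checking that LICQ holds automatically from the $-E_{ij}$ components, and identifying $Z^*=-\Lambda$). The sign bookkeeping and the final identification with \eqref{KKT} all check out.
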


\subsection{Manifold inexact augmented Lagrangian method}

The augmented Lagrangian associated with \eqref{problem2} is
\begin{equation}\label{augmented Lagrangian}
\begin{split}
  \mathcal{L}_{\rho}(X,Y; Z) & = L(X,Y; Z) + \frac{\rho}{2} \|AX-Y\|_F^2 \\
                           & = f(X) + g(Y) - \left<Z, AX-Y \right> + \frac{\rho}{2} \|AX-Y\|_F^2.
  \end{split}
\end{equation}

For a given $(X^k, Y^k, Z^k)$,  the next iterate generated by our manifold inexact augmented Lagrangian method (MIALM) is given by
\begin{equation}\label{ALM-step}
  \left\{
  \begin{array}{rl}
   (X^{k+1},Y^{k+1}) &= \arg\min\limits_{X\in\mathcal{M},Y}
    \mathcal{L}_{\rho}(X,Y; Z^k), \\
    Z^{k+1} & = Z^k - \rho (AX^{k+1} - Y^{k+1}).
  \end{array}
  \right.
\end{equation}
The $(X, Y)$- subproblem is intractable due to the nonsmoothess and joint minimization. Hence,  an efficient Riemannian optimization method  should be proposed for this subproblem in MIALM \eqref{ALM-step}. Notice that, for  fixed $\rho>0$ and $Z$ we aim to solve
\begin{equation}\label{subproblem}
  \min_{X\in\mathcal{M},Y\in\mathbb{R}^{d\times r}} \Psi(X,Y) := \mathcal{L}_{\rho} (X,Y; Z)
\end{equation}
Let
\begin{equation} \label{psi}
\begin{split}
  \psi_{Z}(X): =  &~~\inf_{Y} \Psi(X,Y) \\
  = &~~f(X) + g(\mbox{Prox}_{ g/\rho}(AX-\mu Z)) \\
             & + \frac{\rho}{2}\|AX-\frac{1}{\rho} Z - \mbox{Prox}_{\mu g}(AX-\frac{1}{\rho} Z)\|_F^2 - \frac{1}{2\rho}\|Z\|_F^2.
  \end{split}
\end{equation}
 The new iterate  $(\bar{X},\bar{Y})$ could be produced sequentially   by
 \begin{equation}
   \bar{X} = \arg\min_{X\in\mathcal{M}} \psi_{Z}(X), ~~ \bar{Y} = \mbox{Prox}_{ g/\rho}(A\bar{X}-\frac{1}{\rho} Z).
 \end{equation}
In the sense, the MIALM iterate is  rewritten to
\begin{equation}\label{ALM-step1}
  \left\{
  \begin{array}{lc}
    X^{k+1} = \arg\min_{X\in\mathcal{M}} \psi_{Z^k}(X), \\
    Y^{k+1} = \mbox{Prox}_{ g/\rho}(AX^{k+1}-\frac{1}{\rho} Z^k),\\
    Z^{k+1} = Z^k - \rho (AX^{k+1} - Y^{k+1}).
  \end{array}
  \right.
\end{equation}
By \eqref{Morediff}, we have
\begin{equation}
\begin{split}
  \nabla \psi_{Z}(X) & = \nabla f(X) + \rho A^T\left(AX-\frac{1}{\rho} Z - \mbox{Prox}_{\mu g}(AX-\frac{1}{\rho} Z)\right) \\
                           & = \nabla f(X) + \rho A^T\left( \mbox{Prox}_{ \rho g^*}(AX-\frac{1}{\rho} Z)\right)
  \end{split}
\end{equation}
where  $g^*$ is the conjugate operator of  $g$ and defined by $g^*(x) = \sup_v\{\left<x,v\right> - g(v)\}$.  By Lemma \ref{retract-euclidean}, $\psi_Z(\cdot)$   is retraction smooth over Riemannian manifold $\mathcal{M}$, and its Riemannian gradient is   $$\mbox{grad} \psi_{Z}(X) = \mbox{Proj}_{T_X\mathcal{M}}(\nabla
\psi_{Z}(X)).$$  Thus, at the $k$-th iteration, the $X$-subproblem  is identical to find $X^{k+1}$ such that $$\mbox{grad} \psi_{Z^k}(X^{k+1}) = 0.$$

Algorithm \ref{alg2} below summarizes the proposed manifold inexact augmented Lagrangian method in details.
\begin{remark}
  
  \begin{enumerate}
    \item[1)] The proposed  method is an ALM-type method. 
    The complexity of  $X$-subproblem is as same as that of  MADMM. However, our method  obtains a joint optimal solution which guarantees  the convergence, while the MADMM does not.
    \item[2)] All efficient Riemannian optimization methods are applicable for the $X$-subproblem, e.g., Riemannian gradient method, Riemannian Newton method,  etc. 
    \item[3)] The proposed method is utilizable for smooth Riemannian optimization problem under set-constrained, in which $g(X) = \delta_{\Omega}(X)$, the indictor function of constraint set $\Omega$.
  \end{enumerate}
\end{remark}

\begin{algorithm}[tb]
   \caption{Manifold inexact augmented Lagrangian method for problem \eqref{problem}}
   \label{alg2}
\begin{algorithmic}[1]
   \STATE {\bfseries Input:} Let $Z_{\min}< Z_{\max}$,  $X_0\in\mathcal{M}$,  $\bar{Z}_0\in \mathbb{R}^{d\times r}$. Given  $\epsilon_{\min}\ge 0$, $\epsilon_0>0$,  $\rho_0>1$, $\sigma>1, 0<\tau<1$. 
   \FOR{$k=0,1,\cdots$}
   \STATE Produce the next iterate $(X^{k+1}, Y^{k+1})$:   get $X^{k+1}$  by solving problem 
    \begin{equation}\label{xit}
            \min_{X\in\mathcal{M}}\psi_{\bar{Z}^k}(X)
          \end{equation}
 inexactly   with  a tolerance $\epsilon_k$ where  $\{\epsilon_k\}_{k\in\mathbb{N}}\downarrow 0$; let
   \begin{equation}\label{yit}
     Y^{k+1} = \mbox{Prox}_{ g/\rho_k}(AX^{k+1} - \bar{Z}^k).
       \end{equation}

   \STATE Update Lagrangian multiplier $Z^{k+1}$ by
   \begin{equation}\label{zit}
     Z^{k+1} = \bar{Z}^k - \rho_k(AX^{k+1} - Y^{k+1})
   \end{equation}
   \STATE  Project $Z^{k+1}$ onto $\{Z: Z_{\min}\leq Z \leq Z_{\max}\}$ to get $\bar{Z}^{k+1}$.
   \STATE Update  penalty parameter by
   \begin{equation}\label{parait}
     \rho_{k+1} = \left\{
     \begin{array}{cc}
       \rho_{k}, & \mbox{ if } ~\|AX^{k+1} - Y^{k+1}\|_{\infty} \leq \tau \|AX^{k} - Y^{k}\|_{\infty} \\
       \sigma \rho_{k},  & \mbox{otherwise}
     \end{array}
     \right.
   \end{equation}
   \ENDFOR
\end{algorithmic}
\end{algorithm}

\subsection{Riemannian optimization subproblem} \label{sec-sub}
The main computational cost of Algorithm \ref{alg2} is to solve the $X$-subproblem. It is a smooth optimization problem on Riemannian manifold.  
The $X$-subproblem could  be restated  as follows
\begin{equation}\label{x-subproblem}
  \min_X \psi(X),~ ~\mbox{s.t.}~ X\in\mathcal{M}.
\end{equation}
where $\psi=\psi_{\bar{Z}}$   given by \eqref{psi}.  
 It is a retraction smooth function, so  problem \eqref{subproblem} can be solved by some Riemannian gradient methods  including Riemannian gradient descent (RGD), Riemannian conjugate gradient (RCG) and Riemannian trust region (RTR) method,  etc.  In this paper,   we adopt a RGD method to  problem \eqref{x-subproblem}, see Algorithm \ref{alg3} for details.

\begin{algorithm}[tb]
   \caption{Riemannian gradient method for subproblem \eqref{x-subproblem} }
   \label{alg3}
\begin{algorithmic}[1]
   \STATE {\bfseries Given: }   $X^0\in\mathcal{M}$,   tolerance $\epsilon>0$.  Let  $\eta^0 = -\mbox{grad}\psi(X^0)$ .
   \STATE {\bfseries Initialize:  } k = 0.

   \WHILE{$\|\eta^k\| \geq \epsilon$}
   \STATE Pick $\eta^k = - \mbox{grad}\psi(X^k) $ and a step size $\alpha_k$, compute
   \begin{equation}
     X^{k+1} = \mbox{R}_{X^k}(\alpha_k \eta^k).
   \end{equation}
   \ENDWHILE
\end{algorithmic}
\end{algorithm}

\section{Convergence analysis}\label{convergence}
For convenience of notation, we rewrite  problem \eqref{problem2}   to a standard constraint optimization problem on manifold. Let $W= [X; Y]\in\mathbb{R}^{(n+d)\times r}$, 
and  $\mathcal{N} = \mathcal{M} \times \mathbb{R}^{d\times r}$ be a product manifold. Then, problem \eqref{problem2} can be rewritten to
\[\label{newpro}
\min_W ~\theta (W), ~ \mbox{s.t. }~ h(W) = 0, ~W\in\mathcal{N}.
\]
 where $\theta (W) = f(X) + g(Y)$, and $h(W)= [A,-I]W \in\mathbb{R}^{d\times r}$. The partial  augmented Lagrangian function associated to problem \eqref{newpro}  is
\[\label{alf}
\mathcal{L}_{\rho}(W; Z) = \theta (W)+ \sum_{i=1}^{d}\sum_{j=1}^{r}Z_{ij}[h(W)]_{ij} + \frac{\rho}{2}\sum_{i=1}^{d}\sum_{j=1}^{r} [h(W)]_{ij}^2
\]
The  KKT conditions of problem \eqref{newpro} are given by
\begin{equation}\label{kkt}
  0\in \partial \theta (W^*)+  \sum_{i=1}^{d}\sum_{j=1}^{r}Z^*_{ij} \mbox{grad}[h(W^*)]_{ij}, ~~ h(W^*) = 0 , ~~W^*\in\mathcal{N},
\end{equation}
 where $ \partial \theta (W^*) $ is Riemannian subdifferential of $\theta$ at $W^*$. The KKT system \eqref{kkt} is identical to \eqref{KKT} because of that $\mathcal{M}$ is a Riemannian submanifold embedded in Euclidean space.
Inspired by Zhang, Yang and Song \cite{Yang2014},   the constraint qualifications of problem \eqref{newpro} is given by:

\begin{definition}[LICQ]
  Linear independence constraint qualifications (LICQ) are said to hold at $W^*\in\mathcal{N}$ for problem \eqref{newpro} if
  \[
  \Big\{\mbox{grad}[h(W^*)]_{ij}| i=1,\cdots,d;j =1,\cdots,r \Big\} \mbox{ are linearly independent in } T_{W^*}\mathcal{N}. \nn
  \]
\end{definition}


We will analyze the convergence of Algorithm \ref{alg2} in the following two cases:
\begin{enumerate}
  \item[1)]The iterate $(X^{k+1},Y^{k+1}) $ is an $\epsilon_k$-stationary point of  iteration subproblem, i.e.,
  \begin{equation}\label{local}
    \|\mbox{grad}\psi_{\bar{Z}^k}(X^{k+1})\| \leq \epsilon_k.
  \end{equation}
  \item[2)] The iterate $(X^{k+1},Y^{k+1}) $ is an $\epsilon_k$-global minimizer of   iteration subproblem, i.e.,
  \begin{equation}\label{global}
    \mathcal{L}_{\rho_k}(W^{k+1};\bar{Z}^k) \leq \mathcal{L}_{\rho_k}(W;\bar{Z}^k) + \epsilon_k, ~\forall ~ W\in\mathcal{N}.
  \end{equation}
\end{enumerate}

\begin{remark}\label{rm3}
  In the case 1), \eqref{local}  is indeed to find $W^{k+1}$ such that $$\delta^{k} \in \partial \mathcal{L}_{\rho_k}(W^{k+1};\bar{Z}^{k}), ~\|\delta^k\| \leq \epsilon_k. $$
\end{remark}

\begin{theorem}\label{localconv}
  Suppose $\{W^k\}_{k\in\mathbb{N}}$ is a sequence generated by Algorithm \ref{alg2},  Assumption \ref{assum}  and \eqref{local} hold.
 Then, sequence  $\{W^k\}_{k\in\mathbb{N}}$ has at least one cluster point. Furthermore, if  $W^*$ is  a cluster point, and  LICQ  holds at $W^*$, then $W^*$ is a KKT point of  problem \eqref{newpro}.
\end{theorem}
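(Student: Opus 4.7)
The plan is to adapt the classical augmented Lagrangian convergence template to the manifold setting, in three stages: boundedness of iterates, asymptotic feasibility, and passage to the limit in the inexact stationarity condition. For boundedness, compactness of $\mathcal{M}$ gives $\{X^k\}$ bounded, and the projection step in Algorithm \ref{alg2} confines $\{\bar Z^k\}$ to $[Z_{\min},Z_{\max}]$. The crucial point is that the $Y$-subproblem is solved exactly by the proximal map \eqref{yit}; combining its first-order condition with the multiplier update \eqref{zit} yields $Z^{k+1}\in\partial g(Y^{k+1})$, and Assumption \ref{assum}B then forces $\{Z^{k+1}\}$ to be bounded. Non-expansivity of the proximal operator delivers boundedness of $\{Y^{k+1}\}$, so $\{W^k\}$ has at least one cluster point $W^*$.

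For feasibility, I would split on the behaviour of $\rho_k$. If $\{\rho_k\}$ is eventually constant, then \eqref{parait} forces $\|AX^{k+1}-Y^{k+1}\|_\infty\le\tau\|AX^k-Y^k\|_\infty$ for all large $k$, and geometric decay gives $AX^k-Y^k\to 0$. If $\rho_k\to\infty$, I instead rearrange \eqref{zit} into
\[
AX^{k+1}-Y^{k+1}=\frac{\bar Z^k-Z^{k+1}}{\rho_k},
\]
and the already-established boundedness of $\bar Z^k$ and $Z^{k+1}$ makes the right-hand side vanish. Either way $h(W^*)=0$.

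For the KKT condition, I use the explicit expression for $\nabla_X\mathcal L_{\rho_k}$: combined with \eqref{zit}, the inexact stationarity \eqref{local} (as reformulated in Remark \ref{rm3}) rewrites as
\[
\mbox{Proj}_{T_{X^{k+1}}\mathcal M}\bigl(\nabla f(X^{k+1})-A^TZ^{k+1}\bigr)=\delta^k,\qquad \|\delta^k\|\le\epsilon_k\downarrow 0.
\]
By boundedness of $\{Z^{k+1}\}$ I extract a subsequence along which $W^{k_j}\to W^*$ and $Z^{k_j+1}\to Z^*$. Continuity of $\nabla f$ and of the tangent-space projection passes the first relation to the limit; outer semicontinuity of $\partial g$ applied to $Z^{k_j+1}\in\partial g(Y^{k_j+1})$ supplies $Z^*\in\partial g(Y^*)$; together with the feasibility from the preceding stage, these are precisely the conditions in \eqref{kkt}. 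LICQ at $W^*$ enters at the final step to guarantee that this $Z^*$ is a genuine Lagrange multiplier in the normal-cone sense of Lemma \ref{optimality}. I expect the trickiest ingredient to be the boundedness of the unprojected multipliers $\{Z^{k+1}\}$, used both in the $\rho_k\to\infty$ branch of feasibility and in the KKT passage; this is precisely where Assumption \ref{assum}B (uniform boundedness of $\partial g$) combined with the exact solution of the $Y$-subproblem enters essentially, and without either ingredient the argument would collapse.
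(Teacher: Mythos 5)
Your proof is correct, but it reaches the two delicate steps by a genuinely different route than the paper. The paper proves asymptotic feasibility in the $\rho_k\to\infty$ case by dividing the inexact stationarity relation by $\rho_k$, passing to the limit, and invoking LICQ at $W^*$ to force $h(W^*)=0$; it then proves boundedness of the unprojected multipliers $\{Z^{k}\}$ by a separate contradiction argument that again normalizes by $\|Z^{k+1}\|_\infty$ and appeals to LICQ. You instead observe that the exact proximal $Y$-update combined with \eqref{zit} gives $-Z^{k+1}\in\partial g(Y^{k+1})$ (note the sign: the optimality condition of \eqref{proximal} yields $\rho_k(AX^{k+1}-Y^{k+1})-\bar Z^k=-Z^{k+1}$ as the subgradient), so Assumption \ref{assum}B bounds $\{Z^{k+1}\}$ outright, and feasibility follows from the elementary identity $h(W^{k+1})=(\bar Z^k-Z^{k+1})/\rho_k$. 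This is shorter and, contrary to your closing remark, makes LICQ superfluous everywhere in your argument --- the limit multiplier $Z^*$ is exhibited explicitly and the relations $\mathrm{Proj}_{T_{X^*}\mathcal M}(\nabla f(X^*)-A^TZ^*)=0$, $0\in\partial g(Y^*)+Z^*$, $AX^*=Y^*$ are verified directly, which is all \eqref{kkt} asks for. What the paper's heavier LICQ-based template buys is robustness: it would survive weakening the uniform boundedness of $\partial g$ (e.g., to indicator-type $g$), on which your argument leans essentially. Two small polish points: fix the sign of the subgradient identity as above, and replace the appeal to ``non-expansivity of the proximal operator'' for boundedness of $\{Y^{k+1}\}$ (non-expansivity compares two prox evaluations at the same parameter, while here $\mu_k=1/\rho_k$ varies) by the same update identity $Y^{k+1}=AX^{k+1}-(\bar Z^k-Z^{k+1})/\rho_k$ together with $\rho_k\ge\rho_0>0$, which gives the bound immediately from what you have already established.
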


  \begin{proof}  To prove the first part of  Theorem \ref{localconv}, we need to show that  sequence $\{W^k\}_{k\in\mathbb{N}}$ is bounded. 
  By Assumption \ref{assum},  $\mathcal{M}$ is a compact manifold,  hence $\{X^k\}$ is bounded. By  $$Y^{k+1} = \mbox{Prox}_{ g/\rho}(AX^{k+1}-\frac{1}{\rho} \bar{Z}^k),$$
  there exists $\nu^k \in \partial g(Y^{k+1})$ such that
$$
    0 = \nu^k - \rho_k(AX^{k+1} - \frac{1}{\rho} \bar{Z}^k - Y^{k+1}).
$$
  Again by Assumption \ref{assum}, $\partial g(Y^{k+1})$ is bounded, and hence $\nu^k$ is also bounded. It is  obvious that $\bar{Z}^k\in [Z_{min}, Z_{max}]$  is   bounded. Since sequence $\{\rho_k\}_{k\in\mathbb{N}}$ is nondescreasing,  we have $\rho_k \geq \rho_0~ (\forall k\in\mathbb{N})$.  Hence  $\{Y^k\}_{k\in\mathbb{N}}$ is bounded.
  In summary,  sequence $\{W^k\}_{k\in\mathbb{N}}$ is bounded.

  Next,  we will show that $W^*$ is a feasible point of \eqref{newpro}.
  By the updating rule of $W$ in Algorithm \ref{alg2}, we have $W^k\in\mathcal{N}$.

    If  $\{\rho_k\}_{k\in\mathbb{N}}$ is bounded,  by the updating rule of $\rho_k$,     there exists a $k_0\in\mathbb{N}$ such that
    $$\|h(W^k)\|_{\infty} \leq \tau\|h(W^{k-1})\|_{\infty},~~ \forall k\geq k_0,$$
where $\tau\in (0, 1)$.  Hence,   $h(W^*) = 0$.

If $\{\rho_k\}$ is unbounded, by Remark \ref{rm3}  we have 
$$\delta^{k} \in \partial \mathcal{L}_{\rho_k}(W^{k+1};\bar{Z}^{k}), ~\|\delta^k\| \leq \epsilon_k. $$
where $\epsilon^k\downarrow 0$ as $k\rightarrow \infty$. Thus there exists $U^k \in \partial \theta(W^k)$  such that
\[\label{wkdelta}
     U^k + \sum_{i=1}^{d}\sum_{j=1}^{r} \left(\bar{Z}^k_{ij} + \rho_k [h(W^k)]_{ij}\right) \mbox{grad} [h(W^k)]_{ij} = \delta^k.
\]
Dividing both sides of \eqref{wkdelta} by $\rho_k$, we have
      \begin{equation}
       \sum_{i=1}^{d}\sum_{j=1}^{r} \left(\bar{Z}^k_{ij}/\rho_k +  [h(W^k)]_{ij}\right) \mbox{grad} [h(W^k)]_{ij} = (\delta^k  - U^k)/\rho_k
    \end{equation}
    where $\{\bar{Z}^k\}$ is bounded, and $\delta^k \downarrow 0$.   Since $\theta(W) = f(X) + g(Y)$, where $g$ is a convex function on $\mathbb{E}$, and $$\partial \theta(W) = \left(
   \begin{array}{c}
     \mbox{grad}f(X) \\
     \partial g(Y)
   \end{array}\right),$$ where $\partial g(Y)$ is a subdifferential (set) in usual sense. Invoked by Proposition B.24(b) in \cite{bertsekas1997nonlinear}, the set $\bigcup_{k\in\mathcal{K}} \partial g(Y^k)$ is bounded because that $\{Y^k\}_{k\in\mathcal{K}}$ is a bounded set. In addition, $f(X)$ is a retraction smooth function, hence the Riemannian gradient sequence $\{ \mbox{grad}f(X^k)\}_{k\in\mathcal{K}}$ is bounded. Thus, we have that $\bigcup_{k\in\mathcal{K}} \partial \theta(W^k)$ is bounded. This means that $\{U^k\}$ is bounded. Taking limits as  $k\in\mathcal{K}$ going to infinity on both sides of \eqref{wkdelta}, and using  the continuity and differentiability  of $ h$, we have,
    \begin{equation}
       \sum_{i=1}^{d}\sum_{j=1}^{r} \left(   [h(W^*)]_{ij}\right) \mbox{grad} [h(W^*)]_{ij}  = 0
    \end{equation}
    Note that LICQ holds at $W^*$, we conclude that $[h(W^*)]_{ij} = 0$ for all  $i, j$.

    Since $\{U^k\}_{k\in\mathcal{K}}$ is bounded, there exists a subsequence $\mathcal{K}_1\subset \mathcal{K}$ such that $\lim\limits_{k\to \infty, k\in\mathcal{K}_1} U^k = U^*$. Recall that $\lim\limits_{k\to \infty, k\in\mathcal{K}_1}W^k = W^*$. We get
    $$U^* \in \partial \theta (W^*) $$
  by the closedness property of the limiting subdifferential. Together with $Z^{k+1}_{ij} = \bar{Z}^k + \rho_k [h(W^k)]_{ij}$ for all $i,j$, one  can get  from Algorithm \ref{alg2} that,  for all $k\in\mathcal{K}_1$,
  \begin{equation}\label{suboptim}
      U^k + \sum_{i=1}^{d}\sum_{j=1}^{r} \left(Z^{k+1}_{ij}\right) \mbox{grad} [h(W^k)]_{ij} = \delta^k
    \end{equation}
   where $\delta^k$ satisfying $\|\delta^k\| \leq \epsilon^k$, and $U^k \in \partial \theta (W^k)$.

  We claim that $\{Z^k\}$ is bounded. Otherwise, assume  $\{Z^k\}$ is unbounded, we have
  \begin{equation}
      \frac{ U^k}{\|Z^{k+1}\|_{\infty}} + \sum_{i=1}^{d}\sum_{j=1}^{r} \left(\frac{ Z^{k+1}_{ij}}{\|Z^{k+1}\|_{\infty}}\right) \mbox{grad} [h(W^k)]_{ij} = \frac{\delta^k}{\|Z^{k+1}\|_{\infty}}\nonumber
    \end{equation}
    Since $\frac{ Z^{k+1}}{\|Z^{k+1}\|_{\infty}} \in [-1,1]$ is bounded, there exists a subsequence $\mathcal{K}_2 \subset \mathcal{K}_1$ such that $\lim\limits_{k\to \infty, k\in\mathcal{K}_2} \frac{ Z^{k+1}}{\|Z^{k+1}\|_{\infty}} = \bar{Z}$, where $\bar{Z}$ is a nonzero matrix. Taking the limit on  $k\in\mathcal{K}_2$ going to infinity,  we  obtain
    \begin{equation}
       \sum_{i=1}^{d}\sum_{j=1}^{r} \bar{Z}_{ij} \mbox{grad} [h(W^*)]_{ij} = 0,
    \end{equation}
    which contradicts the LICQ condition at $W^*$.

   Since $\{U^k\}$ is bounded and $\{\delta^k\} \downarrow 0$,  there exists a subsequence $\mathcal{K}_3 \subset \mathcal{K}_2$ such that $\lim\limits_{k\to \infty, k\in\mathcal{K}_3} U^k = U^*$ and $\lim\limits_{k\to \infty, k \in\mathcal{K}_3} Z^k = Z^*$.  By the continuity of mapping  $\mbox{grad}~h$, and taking limits on  $k\in\mathcal{K}_3$ going to infinity on both sides of \eqref{suboptim}, we have
     \begin{equation}
      U^* + \sum_{i=1}^{d}\sum_{j=1}^{r} \left(Z^*_{ij}\right) \mbox{grad} [h(W^*)]_{ij} = 0.
    \end{equation}
  \end{proof}

\begin{lemma}
  Suppose that $W\in\mathcal{N}= \mathcal{M}\times R^{d\times r}$, and   $\mathcal{M}$ is a stiefel manifold denoted by $St(n,r)$). Then the LICQ   always holds at $\forall ~W\in \mathcal{N}$.
\end{lemma}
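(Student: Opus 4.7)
The plan is to exploit the product structure of $\mathcal{N} = St(n,r) \times \mathbb{R}^{d \times r}$ and the fact that $h(W) = AX - Y$ depends trivially (and unconstrained) on the $Y$-block. The key observation will be that although the $X$-part of each tangent vector $\mbox{grad}[h(W)]_{ij}$ has to be projected onto $T_X St(n,r)$ and could in principle be anything, the $Y$-part needs no projection (since $\mathbb{R}^{d\times r}$ is a linear space), and the $Y$-parts alone already form a linearly independent family.

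First I would write out $h(W)$ coordinate-wise: with $W=[X;Y]$, we have $[h(W)]_{ij} = \sum_{k} A_{ik} X_{kj} - Y_{ij}$. For the product manifold $\mathcal{N}$, the tangent space at $W=[X;Y]$ decomposes as $T_W \mathcal{N} = T_X St(n,r) \times \mathbb{R}^{d\times r}$, and the Riemannian gradient on $\mathcal{N}$ of a function decomposes blockwise as
\[
\mbox{grad}[h(W)]_{ij} = \bigl(\,\mbox{Proj}_{T_X St(n,r)}\!\bigl(A^T e_i e_j^T\bigr),\; -e_i e_j^T\,\bigr),
\]
where the $Y$-block requires no projection because $\mathbb{R}^{d\times r}$ is a linear (Euclidean) factor. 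Here $e_i$ and $e_j$ are the standard basis vectors in $\mathbb{R}^d$ and $\mathbb{R}^r$ respectively.

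Next I would suppose a linear combination $\sum_{i=1}^{d}\sum_{j=1}^{r} \lambda_{ij} \mbox{grad}[h(W)]_{ij} = 0$ in $T_W\mathcal{N}$, and read off the equation forced by the $Y$-block:
\[
\sum_{i=1}^{d}\sum_{j=1}^{r} \lambda_{ij}\bigl(-e_i e_j^T\bigr) = 0 \quad \text{in } \mathbb{R}^{d\times r}.
\]
Since $\{e_i e_j^T : 1\le i\le d,\; 1\le j\le r\}$ is the standard basis of $\mathbb{R}^{d\times r}$, this immediately gives $\lambda_{ij}=0$ for all $i,j$. Hence the gradients are linearly independent in $T_W\mathcal{N}$, which is exactly the LICQ condition, and the argument holds at every $W\in\mathcal{N}$ with no restriction on $A$ or on the $X$-block of the projection.

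There is no genuine obstacle here; the only subtle point is justifying the blockwise formula for the Riemannian gradient on a product manifold where one factor is a linear (Euclidean) space, so that the $Y$-component really is $-e_i e_j^T$ without any projection. Once that is in place, linear independence is immediate from the block triangular structure and the proof terminates in a few lines.
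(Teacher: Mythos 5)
Your proof is correct, but it takes a different and in fact more streamlined route than the paper. The paper works with the Euclidean gradients $\nabla [h(W)]_{ij} = (A^Te_i\bar e_j^T;\, -e_i\bar e_j^T)$, writes down an explicit basis $\{X(\bar e_i\bar e_j^T + \bar e_j\bar e_i^T)\}$ of the normal space $N_X St(n,r)$, checks that the union of these two families is linearly independent, and then translates the statement $\sum_{ij} Z_{ij}\nabla[h(W)]_{ij}\in N_W\mathcal{N} \Rightarrow Z=0$ back into the Riemannian LICQ via $\mbox{grad}[h(W)]_{ij} = \mbox{Proj}_{T_W\mathcal{N}}(\nabla[h(W)]_{ij})$. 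You instead project directly onto the product tangent space $T_W\mathcal{N} = T_X St(n,r)\times\mathbb{R}^{d\times r}$ and observe that the $Y$-block of each $\mbox{grad}[h(W)]_{ij}$ is exactly $-e_ie_j^T$ (no projection needed on a linear factor), so reading off the $Y$-block of any vanishing linear combination kills all coefficients at once. The underlying mechanism is the same in both arguments --- it is the $-e_ie_j^T$ block that does all the work, and this is also what makes the paper's ``easy to show'' independence claim true --- but your version buys two things: it avoids having to know the normal-cone structure of the Stiefel manifold entirely, and it proves the stronger statement that LICQ holds for $\mathcal{N}=\mathcal{M}\times\mathbb{R}^{d\times r}$ with \emph{any} embedded submanifold $\mathcal{M}$, not just $St(n,r)$, and for any $A$. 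The one point you flag --- that the Riemannian gradient on a product of embedded submanifolds decomposes blockwise, with no projection on the Euclidean factor --- is standard for the product metric and is exactly the right thing to justify; with that in place the argument is complete.
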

\begin{proof}
Let $e_i\in\mathbb{R}^d$ be  a $m$-dimensional coordinate vector in which the $i$-th entry is 1 and 0 for others, and $\bar{e}_j\in\mathbb{R}^r$ be   a $r$-dimensional coordinate vector. Then
  \begin{equation}
    \nabla [h(W)]_{ij} = \left(
    \begin{array}{c}
      A^Te_i\bar{e}_j^T \\
      -e_i\bar{e}_j^T
    \end{array}
    \right), ~~ i=1,\cdots,d; j=1,\cdots, r. \nonumber
  \end{equation}
A basis of the  normal cone of $St(n,r)$ at $X$, denoted by $N_{X}St(n, r)$,  is given by
\begin{equation}
  \left\{X(\bar{e}_i\bar{e}_j^T + \bar{e}_j\bar{e}_i^T): i=1,\cdots,r, j=1,\cdots,r \right\}.\nonumber
\end{equation}
It is easy to  show that, for   $\forall~W\in\mathcal{N}$,  all the vectors in the set
$$\left\{
\left(
    \begin{array}{c}
      A^Te_i\bar{e}_j^T \\
      -e_i\bar{e}_j^T
    \end{array}
    \right), i=1,\cdots, d; j=1,\cdots, r. \right\} \bigcup \left\{
\left(
\begin{array}{c}
  X(\bar{e}_i\bar{e}_j^T + \bar{e}_j\bar{e}_i^T) \\
  0
\end{array}
\right), i=1,\cdots,r; j=1,\cdots, r.
\right\} $$
are linearly independent.  Hence, if there exists $Z$ such that
\[\label{licq}
\sum_{i=1}^{d}\sum_{j=1}^{r} Z_{ij}\nabla [h(W)]_{ij} \in N_W\mathcal{N},
\]
 we have   $Z = 0$.  Since $\mathcal{N}$ is a submanifold of  Euclidean space, it derives immediately   that  \[\nn
\sum_{i=1}^{d}\sum_{j=1}^{r} Z_{ij}\mbox{grad} [h(W)]_{ij} =0,
\]
holds if and only if $Z = 0$.  Which implies    LICQ  holds at  $W$ and  completes the proof.
\end{proof}

Next, we consider the case that  a $\epsilon_k$-global minimizer of the iteration subproblem could be  obtained at each iteration of Algorithm \ref{alg2}.

\begin{theorem}
  Assume that $\{W^k\}_{k\in\mathbb{N}}$ is a sequence generated by Algorithm \ref{alg2},   Assumption \ref{assum} holds,   and  \eqref{global} is satisfied  at each iteration of Algorithm \ref{alg2}. Let $W^*$ be a limit point of $\{W^k\}_{k\in\mathbb{N}}$.  Then we have
\[\label{constob} \sum_{i=1}^{d}\sum_{j=1}^{r}[h(W^*)]_{ij}^2\leq \sum_{i=1}^{d}\sum_{j=1}^{r}[h(W)]_{ij}^2, ~\forall ~W\in\mathcal{N}.\]
\end{theorem}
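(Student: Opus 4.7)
The plan is to exploit the $\epsilon_k$-global minimality property \eqref{global}. For any fixed $W \in \mathcal{N}$ and each $k$, one has $\mathcal{L}_{\rho_k}(W^{k+1}; \bar Z^k) \le \mathcal{L}_{\rho_k}(W; \bar Z^k) + \epsilon_k$. Expanding with \eqref{alf} and dividing both sides by $\rho_k$ yields
$\tfrac{1}{2}\sum_{i,j}[h(W^{k+1})]_{ij}^2 - \tfrac{1}{2}\sum_{i,j}[h(W)]_{ij}^2 \le \rho_k^{-1}\bigl(\theta(W)-\theta(W^{k+1}) + \langle \bar Z^k, h(W)-h(W^{k+1})\rangle + \epsilon_k\bigr)$.
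The conclusion \eqref{constob} will follow once I show that the right-hand side vanishes along a subsequence $\{W^{k+1}\}_{k\in\mathcal{K}}$ converging to $W^*$.

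First I would verify that $\{W^k\}$ is bounded, reusing the argument from Theorem \ref{localconv}: compactness of $\mathcal{M}$ bounds $\{X^k\}$, while the first-order conditions of the proximal step \eqref{yit}, combined with the uniformly bounded $\partial g$ from Assumption \ref{assum} and the projected multipliers $\bar Z^k \in [Z_{\min}, Z_{\max}]$, bound $\{Y^k\}$. On this bounded set, continuity of $\theta = f+g$ (with $g$ finite convex on $\mathbb{E}$, hence locally bounded) and linearity of $h$ render $\theta(W^{k+1})$ and $h(W^{k+1})$ uniformly bounded in $k$, whereas $\theta(W)$ and $h(W)$ are fixed constants.

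The argument then splits on the behaviour of $\{\rho_k\}$. If $\{\rho_k\}$ is bounded, the update rule \eqref{parait} forces $\|h(W^{k+1})\|_\infty \le \tau \|h(W^k)\|_\infty$ with $\tau\in(0,1)$ for all sufficiently large $k$, so $h(W^k)\to 0$; by continuity $h(W^*)=0$, which trivially implies \eqref{constob}. If $\rho_k \to \infty$, the numerator on the right-hand side remains bounded while $\rho_k \to \infty$ and $\epsilon_k \to 0$, so the right-hand side vanishes along $\mathcal{K}$; passing to the limit gives $\tfrac{1}{2}\sum_{i,j}[h(W^*)]_{ij}^2 \le \tfrac{1}{2}\sum_{i,j}[h(W)]_{ij}^2$, which is exactly \eqref{constob}.

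The main obstacle is keeping careful track of the boundedness hypotheses needed to conclude that the numerator is uniformly controlled along $\mathcal{K}$. This reduces to ensuring that $g$ is finite on the bounded set containing $\{Y^{k+1}\}$, which is granted by Assumption \ref{assum} since a proper convex function on the finite-dimensional space $\mathbb{E}$ is continuous on bounded subsets of its effective domain. The case where $g(Y)=+\infty$ for some comparison point $W$ is immediate, as the right-hand side of \eqref{constob} is then infinite.
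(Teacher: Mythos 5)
Your proof is correct and follows essentially the same route as the paper: the same case split on the boundedness of $\{\rho_k\}$, the same use of \eqref{parait} to get $h(W^k)\to 0$ in the bounded case, and the same comparison of augmented Lagrangian values via \eqref{global} in the unbounded case --- the only difference is that you divide by $\rho_k$ and pass to the limit directly, where the paper argues by contradiction after completing the square. One small slip worth noting: your closing remark that the right-hand side of \eqref{constob} becomes infinite when $g(Y)=+\infty$ is off (that right-hand side involves only $h$, not $\theta$), but the case is vacuous anyway since Assumption \ref{assum} makes $g$ a finite-valued convex function on all of $\mathbb{E}$.
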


\begin{proof}
  Consider  the following two cases:  $\{\rho_k\}$ bounded and $\rho_k \rightarrow \infty$.

 If $\{\rho_k\}$ is bounded, then there exists $k_0$ such that $\rho_k = \rho_{k_0}$ for all $k\geq k_0$. Hence 
  $$ \sum_{i=1}^{d}\sum_{j=1}^{r}[h(W^{k+1})]_{ij}^2\leq  \tau\sum_{i=1}^{d}\sum_{j=1}^{r} [h(W^k)]_{ij}^2, i=1,\cdots,m;j=1,\cdots,r.$$
  Which implies that $h(W^k) \rightarrow 0$ as $k \rightarrow \infty$. We have
$ h(W^*) = 0$,   and  \eqref{constob} holds.

  Then to the case  $\rho_k\tto \infty$.  Since $W^*$ is a limit point of $\{W^k\}$, there exists a subsequence $\mathcal{K}\subset \mathbb{N}$ such that 
  \begin{equation}
  \lim_{k\tto \infty, ~k\in \mathcal{K}}W^k = W^*. \nn
  \end{equation}
  Assume by contradiction there exists $W\in\mathcal{N}$ such that
  $$\sum_{i=1}^{d}\sum_{j=1}^{r} [h(W^*)]_{ij}^2\geq \sum_{i=1}^{d}\sum_{j=1}^{r}[h(W)]_{ij}^2.$$
  By the boundedness of $\{\bar{Z}^k\}$ and  $\rho_k \to \infty$, there exist $c>0$ and $k_0\in\mathbb{N}$ such that,  for all $k\in \mathcal{K}$ and $k\geq k_0$ we have
  $$
  \sum_{i=1}^{d}\sum_{j=1}^{r}( [h(W^{k+1})]_{ij} + \frac{1}{\rho_k} \bar{Z}^{k}_{ij})^2\geq \sum_{i=1}^{d}\sum_{j=1}^{r}([h(W)]_{ij} + \frac{1}{\rho_k} \bar{Z}^{k}_{ij})^2 + c.
  $$
  Therefore 
  \begin{equation}
  \begin{split}
    \theta (W^{k+1}) + \frac{\rho_k}{2}  \sum_{i=1}^{d}\sum_{j=1}^{r}( [h(W^{k+1})]_{ij} + \frac{1}{\rho_k} \bar{Z}^{k}_{ij})^2 & \geq \theta (W) + \frac{\rho_k}{2}  \sum_{i=1}^{d}\sum_{j=1}^{r}([h(W)]_{ij} + \frac{1}{\rho_k} \bar{Z}^{k}_{ij})^2  \\
    & + \frac{\rho_k c}{2} + \theta (W^{k+1}) - \theta (W).
    \end{split}\nn
  \end{equation}
  Since $\lim\limits_{k\to \infty, k\in \mathcal{K}} W^k = W^*$, and $\{\epsilon_k\}$ is bounded, there exists $k_1>k_0$ such that,
  for  all $k\in \mathcal{K}, k\geq k_1$ we have
  \begin{equation}
    \frac{\rho_k c}{2} +\theta (W^{k+1}) - \theta (W) >\epsilon_k. \nn
  \end{equation}
  Therefore,
    \begin{equation}
  \begin{split}
    \theta (W^{k+1}) + \frac{\rho_k}{2}  \sum_{i=1}^{d}\sum_{j=1}^{r}( [h(W^{k+1})]_{ij} + \frac{1}{\rho_k} \bar{Z}^{k}_{ij})^2 & \geq \theta (W) + \frac{\rho_k}{2}  \sum_{i=1}^{d}\sum_{j=1}^{r}([h(W)]_{ij} + \frac{1}{\rho_k} \bar{Z}^{k}_{ij})^2 + \epsilon_k. \nn
    \end{split}
  \end{equation}
  This contradicts \eqref{global}. We have \eqref{constob} and complete the proof.
\end{proof}

\begin{theorem}
   In Algorithm \ref{alg2},  let $\epsilon_{\min} = 0$ and $W^*$ be a limit point of sequence  $\{W^k\}_{k\in\mathbb{N}}$. If  iterate  $W^{k+1}$ is a $\epsilon_k$- global minimizer satisfying \eqref{global}, then  $W^*$  is a global minimizer of  problem \eqref{problem2}. Meanwhile,  $X^*$  is a global minimizer of problem \eqref{problem}.
\end{theorem}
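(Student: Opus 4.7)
The plan is to build on the feasibility established in the previous theorem and then pass the approximate global optimality of each iterate to the limit, tested against an arbitrary feasible point. First I would invoke the preceding theorem: the inequality $\sum_{i,j}[h(W^*)]_{ij}^2 \leq \sum_{i,j}[h(W)]_{ij}^2$ must hold for every $W \in \mathcal{N}$, and choosing the particular test point $W = (X^*, AX^*) \in \mathcal{N}$ gives $h(W)=0$, so $h(W^*) = 0$. Hence $W^*$ is feasible for problem \eqref{newpro}.

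Next, fix any feasible $W \in \mathcal{N}$, that is, $W = (X, AX)$ with $X \in \mathcal{M}$. From \eqref{global} applied with this test point, and using the form \eqref{alf} of the augmented Lagrangian together with $h(W)=0$, I would obtain
\begin{equation*}
\theta(W^{k+1}) + \langle \bar{Z}^k, h(W^{k+1})\rangle + \frac{\rho_k}{2}\|h(W^{k+1})\|_F^2 \leq \theta(W) + \epsilon_k.
\end{equation*}
Discarding the nonnegative penalty term on the left and applying Cauchy--Schwarz yields
\begin{equation*}
\theta(W^{k+1}) \leq \theta(W) + \epsilon_k + \|\bar{Z}^k\|_F\,\|h(W^{k+1})\|_F.
\end{equation*}

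Now relabel so that $\mathcal{K}$ is a subsequence along which $W^{k+1} \to W^*$ (possible since $W^*$ is a limit point of $\{W^k\}$). Along this subsequence, $\epsilon_k \downarrow 0$, $\{\bar{Z}^k\}$ is bounded by the projection step onto $[Z_{\min},Z_{\max}]$, $h(W^{k+1}) \to h(W^*)=0$ by continuity of the linear map $h$, and $\theta = f+g$ is continuous at $W^*$ (the smoothness of $f$ is given, while the proper closed convex function $g$ with full domain $\mathbb{E}$ is automatically continuous). Passing to the limit yields $\theta(W^*) \leq \theta(W)$ for every feasible $W \in \mathcal{N}$, so $W^*$ is a global minimizer of \eqref{problem2}. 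To transfer to problem \eqref{problem}, given any $X \in \mathcal{M}$ the point $W = (X, AX)$ is feasible in $\mathcal{N}$ and $\theta(W) = f(X) + g(AX) = F(X)$; the previous bound then gives $F(X^*) = \theta(W^*) \leq \theta(W) = F(X)$, so $X^*$ is a global minimizer of \eqref{problem}.

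The main obstacle I expect is controlling the penalty term $\frac{\rho_k}{2}\|h(W^{k+1})\|_F^2$ in the unbounded-$\rho_k$ regime, since no quantitative rate for $\|h(W^{k+1})\|_F \to 0$ has been established. The saving observation is that this term carries the sign on the left of the inequality that lets it be discarded rather than estimated, so only the mild product $\|\bar{Z}^k\|_F \|h(W^{k+1})\|_F$ needs to tend to zero, which is immediate from the boundedness of $\{\bar{Z}^k\}$ and feasibility of $W^*$.
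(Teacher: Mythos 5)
Your proposal is correct and follows essentially the same strategy as the paper: test the $\epsilon_k$-global optimality \eqref{global} against an arbitrary feasible point $W=(X,AX)$ with $h(W)=0$, and pass to the limit along the subsequence converging to $W^*$. The one genuine (and welcome) difference is in how the cross term involving $\bar Z^k$ is handled. The paper completes the square, obtains $\theta(W^{k+1}) \le \theta(W) + \tfrac{1}{2\rho_k}\|\bar Z^k\|_F^2 + \epsilon_k$, and must then split into the cases $\rho_k\to\infty$ (where the extra term vanishes) and $\{\rho_k\}$ bounded (where it keeps the completed square on both sides and cancels it in the limit using $h(W^*)=0$). You instead invoke the preceding theorem up front to get $h(W^*)=0$, drop the nonnegative quadratic penalty, and control the linear term by Cauchy--Schwarz, $|\langle \bar Z^k, h(W^{k+1})\rangle|\le \|\bar Z^k\|_F\,\|h(W^{k+1})\|_F\to 0$; this treats both regimes of $\rho_k$ uniformly and avoids the case analysis. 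Your explicit derivation of $h(W^*)=0$ from \eqref{constob} (via the test point $(X^*,AX^*)$) and the final transfer to problem \eqref{problem} via $\theta(X,AX)=F(X)$ are details the paper leaves implicit, and they are handled correctly.
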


\begin{proof}

 By \eqref{global}, we have
     \begin{equation}
  \begin{split}
    \theta (W^{k+1}) + \frac{\rho_k}{2}  \sum_{i=1}^{d}\sum_{j=1}^{r}( [h(W^{k+1})]_{ij} + \frac{1}{\rho_k} \bar{Z}^{k}_{ij})^2 & \leq \theta (W) + \frac{\rho_k}{2}  \sum_{i=1}^{d}\sum_{j=1}^{r}([h(W)]_{ij} + \frac{1}{\rho_k} \bar{Z}^{k}_{ij})^2 + \epsilon_k
    \end{split} \nn
  \end{equation}
  for all $W\in \mathcal{N}$. Since $h(W) = 0$, we get
       \begin{equation}
  \begin{split}
    \theta (W^{k+1}) + \frac{\rho_k}{2}  \sum_{i=1}^{d}\sum_{j=1}^{r}([h(W^{k+1})]_{ij} +  \frac{1}{\rho_k} \bar{Z}^{k}_{ij})^2 & \leq
    \theta (W) + \frac{\rho_k}{2}  \sum_{i=1}^{d}\sum_{j=1}^{r}( \frac{1}{\rho_k} \bar{Z}^{k}_{ij})^2 + \epsilon_k.
    \end{split}\nn
  \end{equation}
Which  implies that
    \begin{equation}\label{des}
  \begin{split}
    \theta (W^{k+1})  \leq \theta (W) + \frac{\rho_k}{2}  \sum_{i=1}^{d}\sum_{j=1}^{r}(\frac{1}{\rho_k} \bar{Z}^{k}_{ij})^2+ \epsilon_k.
    \end{split}
  \end{equation}

 If $\rho_k\to \infty$, by taking limits on both sides of \eqref{des} as  $k\in \mathcal{K}, k\to \infty$,   and using   $\lim\limits_{k\to \infty, k\in \mathcal{K}}\epsilon_k = 0$, we get
    \begin{equation}
    \theta(W^*)  \leq \theta (W), ~\forall~W\in\mathcal{N}.\nn
  \end{equation}

   In case of that  $\{\rho_k\}$ is bounded, there exists $k_0\in\mathbb{N}$ such that $\rho_k = \rho_{k_0}$ for all $k>k_0$. By \eqref{global} we  have
     \begin{equation}
  \begin{split}
    \theta (W^{k+1}) + \frac{\rho_{k_0}}{2}  \sum_{i=1}^{d}\sum_{j=1}^{r}( [h(W^{k+1})]_{ij} + \frac{1}{\rho_{k_0}} \bar{Z}^{k}_{ij})^2 & \leq \theta (W) + \frac{\rho_{k_0}}{2}  \sum_{i=1}^{d}\sum_{j=1}^{r}([h(W)]_{ij} + \frac{1}{\rho_{k_0}} \bar{Z}^{k}_{ij})^2 + \epsilon_k
    \end{split}\nn
  \end{equation}
  for $W\in \mathcal{N}$. Since $h(W) = 0$, we get
   \begin{equation}\label{desc1}
  \begin{split}
    \theta (W^{k+1}) + \frac{\rho_{k_0}}{2}  \sum_{i=1}^{d}\sum_{j=1}^{r}( [h(W^{k+1})]_{ij} + \frac{1}{\rho_{k_0}} \bar{Z}^{k}_{ij})^2 & \leq \theta(W) + \frac{\rho_{k_0}}{2}  \sum_{i=1}^{d}\sum_{j=1}^{r}( \frac{1}{\rho_{k_0}} \bar{Z}^{k}_{ij})^2 + \epsilon_k
    \end{split}
  \end{equation}
  for all $k\geq k_0$. Let $\mathcal{K}_1 \subset \mathcal{K}$   and 
  \begin{equation}
    \lim_{k\to \infty, k\in \mathcal{K}_1} \bar{Z}^k = Z^*. \nn
  \end{equation}
   Taking limits on both sides of \eqref{desc1} as $k\to \infty, k\in \mathcal{K}_1$, and noting that $h(W^*) = 0$, we get
    \begin{equation}
  \begin{split}
    \theta (W^*) + \frac{\rho_{k_0}}{2}  \sum_{i=1}^{d}\sum_{j=1}^{r}(  \frac{1}{\rho_{k_0}} {Z}^{*}_{ij})^2 & \leq \theta (W) + \frac{\rho_{k_0}}{2}  \sum_{i=1}^{d}\sum_{j=1}^{r}( \frac{1}{\rho_{k_0}} {Z}^{*}_{ij})^2.\nn
    \end{split}
  \end{equation}
Hence
       \begin{equation}
    \theta(W^*)  \leq \theta (W), ~\forall~ W\in \mathcal{N},\nn
  \end{equation}
 and  the proof is completed.
  \end{proof}

\section{Experiments} \label{sec-ex}
Numerical experiments for testing the performance of  the proposed  MIALM method, with compared to  some  existing methods  including  SOC \cite{Lai2014A}, PAMAL \cite{Hong2017Nonconvex}, MADMM \cite{Kovna2015} and ManPG \cite{chen2018proximal}, are presented in the current section. All the  methods   are used  to  solve the compressed modes and sparse PCA problem.  In  the  MIALM   and MADMM, the Riemannian manifold optimization subproblem is handled by  ``Manopt", a Matlab toolbox for optimization on manifolds  \cite{JMLR:v15:boumal14a}. In the SOC, PAMAL and ManPG methods,   the code provided by Chen \cite{chen2018proximal} are used (all codes are available in online).  All experiments are run  on a personal computer with 4.0GHz Intel Core i7  CPU and  16 GB RAM.

\subsection{Compressed modes in Physics}

 In physics, the compressed modes problem (CMs)   seeks spatially localized solutions of the independent-particle Schr\"{o}dinger equation:
\begin{equation}
  \hat{H}\phi(x) = \lambda \phi(x), ~~x\in\Omega,
\end{equation}
where $\hat{H} = -\frac{1}{2}\Delta$  and $\Delta$ is  a Laplacian operator. 
Consider the 1$D$ free-electron (FE) model  with $\hat{H} = -\frac{1}{2}\partial_{x^2}$.  By a proper discretization, the compressed modes problem can be reformulated to
\begin{equation}\label{CMs}\left\{
  \begin{split}
     \min_{X\in\mathbb{R}^{n\times k}} & tr(X^TH X) + \mu \|X\|_1, \\
     \mbox{s.t. } ~~ & X^TX = I_d,
  \end{split}\right.
\end{equation}
where $H$ is the discretized Schr\"{o}dinger operator, $\mu$ is a regularization parameter. The interesting readers are referred to \cite{ozolicnvs2013compressed} for more details.
For   problem \eqref{CMs}, both SOC and PAMAL consider the identical form as follows:
\begin{equation}\left\{
  \begin{split}
     \min_{\Psi,Q,P\in\mathbb{R}^{n\times r}} & tr(X^T H X) + \mu \|Q\|_1, \\
     \mbox{s.t.  }~~ & Q = X, P = X, P^TP = I_r.
  \end{split}\right.
\end{equation}
The MADMM handles the separable reformulation of the form
  \begin{equation}\left\{
  \begin{split}
     \min_{\Psi,Q\in\mathbb{R}^{n\times r}} & tr(X^T H X) + \mu \|Q\|_1, \\
     \mbox{s.t.  }~~ & Q = X,  X^TX = I_r.
  \end{split}\right.
\end{equation}
In our experiments, the domain $\Omega:= [0, 50]$ is discretized with $n$ equally spaced nodes. The parameters of our MIALM are set to : $\tau = 0.99, \sigma = 1.05, \rho_0 = \lambda_{\max}(H)/2, Z_{\min} = -100\cdot 1_{d\times r}, Z_{\max} = 100\cdot 1_{d\times r}, Z^0 = 0_{d\times r}$ and $\epsilon_k = \max(10^{-5},0.9^k)$, where $k\in\mathbb{N}$ is the iteration counter. We terminated MIALM if  $\|X^k - Y^k\|_F^2 \leq 10^{-9}$ or $k\ge 500$. The $qr$ retraction is used in inner iteration of the  MIALM,   and a Barzilai-Borwein stepsize is used to accelerate it. The inner iteration is terminated if  $\|\mbox{grad}\Psi_{\bar{Z}^k}(X)\|_X \leq \epsilon_k$ or   the  iteration number exceeds $20$.  The final objective value obtained  by the MIALM method is denoted by $F_M$. 

For the MADMM, the penalty parameter is set to $\rho = \lambda_{\max}(H)/2$. We terminated MADMM if  $\|X^k - Y^k\|_F^2 \leq 10^{-9}$ or  $F(X^k)\leq F_M + 10^{-7}$, or $k\ge 500$. The inner iteration of the MADMM terminates if  the norm of Riemannian gradient of $X$-subproblem is less than $10^{-5}$ or  the inner iteration number exceeds $20$. For  the SOC, PAMAL and ManPG, the parameters are set   as same as in   \cite{chen2018proximal},  except   that the penalty parameter   $\rho = 2\lambda_{\max}(H)$ in SOC and PAMAL.  The ManPG terminates if stopping criterion described in \cite{chen2018proximal} is met or $F(X^k)\leq F_M + 10^{-7}$.  For easy comparisons, Table \ref{my-label4} lists the objective function value, sparsity of solution and cpu time. One can find from Table  \ref{my-label4}  that,  our MILAM method outperforms   to the other methods.
 
\begin{table}[tbp]
\centering
\begin{center}
\caption{ Comparisons of MIALM and ManPG, MADMM, PAMAL, SOC on CMs problem }
\label{my-label4}
\resizebox{1\textwidth}{90pt}{
\begin{tabular}{|l|l|l|l|l|l|l|l|l|l|l|}\toprule
\hline
&\multirow{2}{*}{$\mu$} & \multicolumn{3}{l|}{MIALM} & \multicolumn{3}{l|}{ManPG}       & \multicolumn{3}{l|}{MADMM} \\ \cline{3-11}
&                   & time    & $F_M$       & sp     & time     & F        & sp       & time    & F       & sp     \\ \hline
 \multirow{9}{*}{\rotatebox{270}{$r = 2,n=128$}}
&0.1                  & 0.021 	&	0.943 	&	0.835  & 0.036 	&	0.943 	&	0.836    & 0.112 	&	0.943 	&	0.836  \\
&0.2                  & 0.016 	&	1.639 	&	0.881  & 0.024 	&	1.639 	&	0.882    & 0.024 	&	1.639 	&	0.882  \\
&0.3                  & 0.020 	&	2.265 	&	0.901  & 0.029 	&	2.265 	&	0.900    & 0.167 	&	2.265 	&	0.903  \\ \cline{2-11}
&\multirow{2}{*}{$\mu$} & \multicolumn{3}{l|}{PAMAL} & \multicolumn{3}{l|}{SOC}       & \multicolumn{3}{l|}{} \\ \cline{3-8}
&                   & time    & F       & sp     & time     & F        & sp       & \multicolumn{3}{l|}{}     \\ \cline{2-8}
&0.1                  & 0.049 	&	0.943 	&	0.837  & 0.024 	&	0.943 	&	0.837    & \multicolumn{3}{l|}{}  \\
&0.2                  & 0.038 	&	1.639 	&	0.882  & 0.017 	&	1.639 	&	0.882    & \multicolumn{3}{l|}{}  \\
&0.3                  & 0.088 	&	2.265 	&	0.901  & 0.026 	&	2.265 	&	0.901   & \multicolumn{3}{l|}{}  \\ \hline
\end{tabular} }
\resizebox{1\textwidth}{90pt}{
\begin{tabular}{|l|l|l|l|l|l|l|l|l|l|l|}\toprule
\hline
&\multirow{2}{*}{$r$} & \multicolumn{3}{l|}{MIALM} & \multicolumn{3}{l|}{ManPG}       & \multicolumn{3}{l|}{MADMM} \\ \cline{3-11}
&                   & time    & $F_M$        & sp     & time     & F        & sp       & time    & F       & sp     \\ \hline
 \multirow{9}{*}{\rotatebox{270}{$\mu = 0.2,n=256$}}
&2                  & 0.021 	&	2.167 	&	0.892  & 0.071 	&	2.167 	&	0.892    & 0.153 	&	2.167 	&	0.892  \\
&4                  & 0.063 	&	4.334 	&	0.887  & 0.233 	&	4.334 	&	0.886    & 0.311 	&	4.338 	&	0.884  \\
&6                  & 0.345 	&	6.500 	&	0.889  & 0.722 	&	6.500 	&	0.884    & 0.531 	&	6.509 	&	0.881  \\ \cline{2-11}
&\multirow{2}{*}{r} & \multicolumn{3}{l|}{PAMAL} & \multicolumn{3}{l|}{SOC}       & \multicolumn{3}{l|}{} \\ \cline{3-8}
&                   & time    & F       & sp     & time     & F        & sp       & \multicolumn{3}{l|}{}        \\ \cline{2-8}
&2                  & 0.127 	&	2.167 	&	0.892  & 0.057 	&	2.167 	&	0.892    & \multicolumn{3}{l|}{}  \\
&4                  & 0.709 	&	4.334 	&	0.888  & 0.273 	&	4.334 	&	0.888    & \multicolumn{3}{l|}{}  \\
&6                  & 3.036 	&	6.500 	&	0.887  & 0.980 	&	6.500 	&	0.887    & \multicolumn{3}{l|}{}  \\ \hline
\end{tabular} }
\resizebox{1\textwidth}{90pt}{
\begin{tabular}{|l|l|l|l|l|l|l|l|l|l|l|}\toprule
\hline
&\multirow{2}{*}{$n$} & \multicolumn{3}{l|}{MIALM} & \multicolumn{3}{l|}{ManPG}       & \multicolumn{3}{l|}{MADMM} \\ \cline{3-11}
 &                   & time    & $F_M$        & sp     & time     & F        & sp       & time    & F       & sp     \\ \hline
 \multirow{9}{*}{\rotatebox{270}{$\mu = 0.6,r=2$}}
&200                  & 0.018 	&	2.265 	&	0.901  & 0.028 	&	2.265 	&	0.901    & 0.167 	&	2.265 	&	0.903  \\
&300                  & 0.017 	&	2.996 	&	0.910  & 0.051 	&	2.996 	&	0.910    & 0.128 	&	3.005 	&	0.909  \\
&500                  & 0.026 	&	3.956 	&	0.920  & 0.132 	&	3.956 	&	0.920    & 0.282 	&	4.048 	&	0.916  \\ \cline{2-11}
&\multirow{2}{*}{$n$} & \multicolumn{3}{l|}{PAMAL} & \multicolumn{3}{l|}{SOC}       & \multicolumn{3}{l|}{} \\ \cline{3-8}
&                   & time    & F       & sp     & time     & F        & sp       & \multicolumn{3}{l|}{}     \\ \cline{2-8}
&200                  & 0.045 	&	2.265 	&	0.902  & 0.028 	&	2.265 	&	0.901    & \multicolumn{3}{l|}{}  \\
&300                  & 0.085 	&	2.996 	&	0.910  & 0.041 	&	2.996 	&	0.910    & \multicolumn{3}{l|}{}  \\
&500                  & 0.253 	&	3.956 	&	0.920  & 0.137 	&	3.956 	&	0.920   & \multicolumn{3}{l|}{}  \\ \hline
\end{tabular} }
\end{center}
\end{table}


\subsection{Sparse principle component analysis}
Given a data set $\{b_1,\cdots, b_m\}$ where $b_i\in\mathbb{R}^{n\times 1}$. The sparse PCA problem is
\begin{equation}\label{spca}\left\{
\begin{split}
  \min_{X\in\mathbb{R}^{n\times r}} &  \sum_{i=1}^{m}\|b_i - XX^Tb_i\|_2^2 + \mu \|X\|_1, \\
  \mbox{s.t. }~~ & X^TX = I_r,
  \end{split}\right.
\end{equation}
where $\mu$ is a regularization parameter. Let $B = [b_1,\cdots, b_m ]^{T}\in\mathbb{R}^{m\times n}$, problem \eqref{spca} has the form: 
\begin{equation}\left\{
  \begin{split}
     \min_{X\in\mathbb{R}^{n\times r}} &  -tr(X^TB^TBX) + \mu \|X\|_1, \\
     \mbox{s.t. }~~ & X^TX = I_r.
  \end{split}\right.
\end{equation}

In our experiments, the random data matrix $B\in\mathbb{R}^{m\times n}$ is  generated by MATLAB function $randn(m, n)$, in which  the entries of $B$ follow the standard Gaussian distribution. We  shift the columns of $B$ such that they have mean $0$, and finally the column-vectors are normalized. The parameters of our MIALM are set as same as that of  used for the CMs problem,  except that  the stopping criterion is modified to   $\|X^k - Y^k\|_F^2 \leq 10^{-8}$ and the penalty parameter   $\rho_0 = \lambda_{\max}^2(B^TB)/2$.  Similarly,  the parameters of the MADMM are also set  as the same as that used for the CMs problem,  except  that  the penalty parameter  $\rho_0 = \lambda_{max}^2(B^TB)/2$.
For the SOC, PAMAL and ManPG methods,   the stopping criterion and parameter settings   provided in \cite{chen2018proximal} are copied. The interesting  readers are referred to \cite{chen2018proximal} for details.  Table \ref{my-label3} lists performance of  all  methods on the sparse PCA problem for comparisons.

\begin{table}[tbp]
\centering
\begin{center}
\caption{ Comparisons of MIALM and ManPG, MADMM, PAMAL, SOC on  SPCA   ($m = 50$) }
\label{my-label3}
\resizebox{1\textwidth}{90pt}{
\begin{tabular}{|l|l|l|l|l|l|l|l|l|l|l|}\toprule
\hline
&\multirow{2}{*}{$\mu$} & \multicolumn{3}{l|}{MIALM} & \multicolumn{3}{l|}{ManPG}       & \multicolumn{3}{l|}{MADMM} \\ \cline{3-11}
&                   & time    & $F_M$        & sp     & time     & F        & sp       & time    & F       & sp     \\ \hline
 \multirow{9}{*}{\rotatebox{270}{$r = 2,n=200$}}
&0.5                  & 0.038 	&	-6.839 	&	0.461  & 0.035 	&	-6.819 	&	0.458    & 0.193 	&	-6.767 	&	0.454  \\
&0.6                  & 0.038 	&	-5.304 	&	0.543  & 0.042 	&	-5.248 	&	0.545    & 0.201 	&	-5.147 	&	0.539  \\
&0.8                  & 0.043 	&	-2.439 	&	0.722  & 0.047 	&	-2.369 	&	0.732    & 0.199 	&	-2.285 	&	0.732  \\ \cline{2-11}
&\multirow{2}{*}{$\mu$} & \multicolumn{3}{l|}{PAMAL} & \multicolumn{3}{l|}{SOC}       & \multicolumn{3}{l|}{} \\ \cline{3-8}
&                   & time    & F       & sp     & time     & F        & sp       & \multicolumn{3}{l|}{}     \\ \cline{2-8}
&0.5                  & 1.919 	&	-6.847 	&	0.460  & 0.251 	&	-6.826 	&	0.458    & \multicolumn{3}{l|}{}  \\
&0.6                  & 2.123 	&	-5.267 	&	0.545  & 0.302 	&	-5.262 	&	0.544    & \multicolumn{3}{l|}{}  \\
&0.8                  & 2.247 	&	-2.387 	&	0.733  & 0.281 	&	-2.371 	&	0.732   & \multicolumn{3}{l|}{}  \\ \hline
\end{tabular} }
\resizebox{1\textwidth}{90pt}{
\begin{tabular}{|l|l|l|l|l|l|l|l|l|l|l|}\toprule
\hline
&\multirow{2}{*}{$r$} & \multicolumn{3}{l|}{MIALM} & \multicolumn{3}{l|}{ManPG}       & \multicolumn{3}{l|}{MADMM} \\ \cline{3-11}
&                   & time    & $F_M$        & sp     & time     & F        & sp       & time    & F       & sp     \\ \hline
 \multirow{9}{*}{\rotatebox{270}{$\mu = 0.6,n=200$}}
&2                  & 0.040 	&	-5.308 	&	0.548  & 0.039 	&	-5.290 	&	0.547    & 0.199 	&	-5.209 	&	0.538  \\
&3                  & 0.047 	&	-7.563 	&	0.562  & 0.058 	&	-7.530 	&	0.561    & 0.223 	&	-7.369 	&	0.552  \\
&5                  & 0.091 	&	-11.625 	&	0.594  & 0.117 	&	-11.571 	&	0.591    & 0.291 	&	-11.304 	&	0.582  \\ \cline{2-11}
&\multirow{2}{*}{r} & \multicolumn{3}{l|}{PAMAL} & \multicolumn{3}{l|}{SOC}       & \multicolumn{3}{l|}{} \\ \cline{3-8}
&                   & time    & F       & sp     & time     & F        & sp       & \multicolumn{3}{l|}{}        \\ \cline{2-8}
&2                  & 0.040 	&	-5.308 	&	0.548  & 0.251 	&	-5.329 	&	0.544    & \multicolumn{3}{l|}{}  \\
&3                  & 3.322 	&	-7.597 	&	0.562  & 0.442 	&	-7.552 	&	0.561    & \multicolumn{3}{l|}{}  \\
&5                  & 6.828 	&	-11.687 	&	0.592  & 0.674 	&	-11.727 	&	0.588    & \multicolumn{3}{l|}{}  \\ \hline
\end{tabular} }
\resizebox{1\textwidth}{90pt}{
\begin{tabular}{|l|l|l|l|l|l|l|l|l|l|l|}\toprule
\hline
&\multirow{2}{*}{$n$} & \multicolumn{3}{l|}{MIALM} & \multicolumn{3}{l|}{ManPG}       & \multicolumn{3}{l|}{MADMM} \\ \cline{3-11}
 &                   & time    & $F_M$        & sp     & time     & F        & sp       & time    & F       & sp     \\ \hline
 \multirow{9}{*}{\rotatebox{270}{$\mu = 0.6,r=2$}}
&200                  & 0.039 	&	-5.323 	&	0.539  & 0.040 	&	-5.283 	&	0.541    & 0.203 	&	-5.166 	&	0.538  \\
&300                  & 0.048 	&	-8.128 	&	0.473  & 0.043 	&	-8.112 	&	0.473    & 0.227 	&	-7.955 	&	0.467  \\
&500                  & 0.085 	&	-14.139 	&	0.399  & 0.054 	&	-14.134 	&	0.399    & 0.303 	&	-13.698 	&	0.385  \\ \cline{2-11}
&\multirow{2}{*}{$n$} & \multicolumn{3}{l|}{PAMAL} & \multicolumn{3}{l|}{SOC}       & \multicolumn{3}{l|}{} \\ \cline{3-8}
&                   & time    & F       & sp     & time     & F        & sp       & \multicolumn{3}{l|}{}     \\ \cline{2-8}
&200                  & 2.187 	&	-5.282 	&	0.545  & 0.288 	&	-5.290 	&	0.542    & \multicolumn{3}{l|}{}  \\
&300                  & 3.037 	&	-8.106 	&	0.477  & 0.443 	&	-8.108 	&	0.474    & \multicolumn{3}{l|}{}  \\
&500                  & 9.618 	&	-14.106 	&	0.400  & 1.283 	&	-14.109 	&	0.398    & \multicolumn{3}{l|}{}  \\ \hline
\end{tabular} }
\end{center}
\end{table}

\section{Conclusions}\label{sec-con}
We proposed a manifold inexact augmented Lagrangian method for nonsmooth composite minimization problem on Riemannian manifold. At each iteration of the proposed method,  we only need to solve a smooth Riemannian manifold minimization subproblem based on  the Moreau envelope.  The convergence of the proposed method is established under some mild assumptions. Numerical experiments show that,  the proposed method is competitive  compared to some existing state-of-the-art methods. 

{\small
\bibliographystyle{abbrv}
\bibliography{ref}

\begin{thebibliography}{10}

\bibitem{Absil2007Trust}
P.-A. Absil, C.~G. Baker, and K.~A. Gallivan.
\newblock Trust-region methods on riemannian manifolds.
\newblock {\em Foundations of Computational Mathematics}, 7(3):303--330, 2007.

\bibitem{absil2017collection}
P.-A. Absil and S.~Hosseini.
\newblock A collection of nonsmooth riemannian optimization problems.
\newblock In {\em Nonsmooth Optimization and Its Applications}, pages 1--15.
  Springer, 2019.

\bibitem{AbsMahSep2008}
P.-A. Absil, R.~Mahony, and R.~Sepulchre.
\newblock {\em Optimization algorithms on matrix manifolds}.
\newblock Princeton University Press, 2009.

\bibitem{beck2017first}
A.~Beck.
\newblock {\em First-order methods in optimization}, volume~25.
\newblock SIAM, 2017.

\bibitem{Bento2017Iteration}
G.~C. Bento, O.~P. Ferreira, and J.~G. Melo.
\newblock Iteration-complexity of gradient, subgradient and proximal point
  methods on riemannian manifolds.
\newblock {\em Journal of Optimization Theory and Applications},
  173(2):548--562, 2017.

\bibitem{bento2011convergence}
G.~d.~C. Bento, J.~C. d.~L. Neto, and P.~R. Oliveira.
\newblock Convergence of inexact descent methods for nonconvex optimization on
  riemannian manifolds.
\newblock {\em arXiv preprint arXiv:1103.4828}, 2011.

\bibitem{bertsekas1997nonlinear}
D.~P. Bertsekas.
\newblock Nonlinear programming.
\newblock {\em Journal of the Operational Research Society}, 48(3):334--334,
  1997.

\bibitem{Bortoloti2018Damped}
M.~Bortoloti, T.~A. Fernandes, O.~P. Ferreira, and J.~Yuan.
\newblock Damped newton's method on riemannian manifolds.
\newblock {\em arXiv preprint arXiv:1803.05126}, 2018.

\bibitem{Boumal2015Riemannian}
N.~Boumal.
\newblock Riemannian trust regions with finite-difference hessian
  approximations are globally convergent.
\newblock In {\em International Conference on Geometric Science of
  Information}, pages 467--475, 2015.

\bibitem{Boumal2016Global}
N.~Boumal, P.-A. Absil, and C.~Cartis.
\newblock Global rates of convergence for nonconvex optimization on manifolds.
\newblock {\em IMA Journal of Numerical Analysis}, 39(1):1--33, 2018.

\bibitem{JMLR:v15:boumal14a}
N.~Boumal, B.~Mishra, P.-A. Absil, and R.~Sepulchre.
\newblock Manopt, a matlab toolbox for optimization on manifolds.
\newblock {\em The Journal of Machine Learning Research}, 15(1):1455--1459,
  2014.

\bibitem{burke2018gradient}
J.~V. Burke, F.~E. Curtis, A.~S. Lewis, M.~L. Overton, and L.~E. Sim{\~o}es.
\newblock Gradient sampling methods for nonsmooth optimization.
\newblock {\em arXiv preprint arXiv:1804.11003}, 2018.

\bibitem{cambier2016robust}
L.~Cambier and P.-A. Absil.
\newblock Robust low-rank matrix completion by riemannian optimization.
\newblock {\em SIAM Journal on Scientific Computing}, 38(5):S440--S460, 2016.

\bibitem{chen2018proximal}
S.~Chen, S.~Ma, A.~M.-C. So, and T.~Zhang.
\newblock Proximal gradient method for manifold optimization.
\newblock {\em arXiv preprint arXiv:1811.00980}, 2018.

\bibitem{chen2016augmented}
W.~Chen, H.~Ji, and Y.~You.
\newblock An augmented lagrangian method for 1-regularized optimization
  problems with orthogonality constraints.
\newblock {\em SIAM Journal on Scientific Computing}, 38(4):B570--B592, 2016.

\bibitem{deCarvalhoBento2016}
G.~de~Carvalho~Bento, J.~X. da~Cruz~Neto, and P.~R. Oliveira.
\newblock A new approach to the proximal point method: convergence on general
  riemannian manifolds.
\newblock {\em Journal of Optimization Theory and Applications},
  168(3):743--755, 2016.

\bibitem{dhingra2018proximal}
N.~K. Dhingra, S.~Z. Khong, and M.~R. Jovanovic.
\newblock The proximal augmented lagrangian method for nonsmooth composite
  optimization.
\newblock {\em IEEE Transactions on Automatic Control}, 2018.

\bibitem{ferreira2019iteration}
O.~P. Ferreira, M.~S. Louzeiro, and L.~Prudente.
\newblock Iteration-complexity of the subgradient method on riemannian
  manifolds with lower bounded curvature.
\newblock {\em Optimization}, 68(4):713--729, 2019.

\bibitem{gohary2009noncoherent}
R.~H. Gohary and T.~N. Davidson.
\newblock Noncoherent mimo communication: Grassmannian constellations and
  efficient detection.
\newblock {\em IEEE Transactions on Information Theory}, 55(3):1176--1205,
  2009.

\bibitem{Grohs2016Nonsmooth}
P.~Grohs and S.~Hosseini.
\newblock Nonsmooth trust region algorithms for locally lipschitz functions on
  riemannian manifolds.
\newblock {\em IMA Journal of Numerical Analysis}, 36(3):1167--1192, 2015.

\bibitem{Grohs2015}
P.~Grohs and S.~Hosseini.
\newblock $\varepsilon$-subgradient algorithms for locally lipschitz functions
  on riemannian manifolds.
\newblock {\em Advances in Computational Mathematics}, 42(2):333--360, 2016.

\bibitem{Hong2017Nonconvex}
Z.~Hong, X.~Zhang, D.~Chu, and L.~Liao.
\newblock Nonconvex and nonsmooth optimization with generalized orthogonality
  constraints: An approximate augmented lagrangian method.
\newblock {\em Journal of Scientific Computing}, 72(1):1--42, 2017.

\bibitem{hosseini2015convergence}
S.~Hosseini.
\newblock Convergence of nonsmooth descent methods via kurdyka--lojasiewicz
  inequality on riemannian manifolds.
\newblock {\em Hausdorff Center for Mathematics and Institute for Numerical
  Simulation, University of Bonn (2015,(INS Preprint No. 1523))}, 2015.

\bibitem{doi:10.1137/16M1108145}
S.~Hosseini, W.~Huang, and R.~Yousefpour.
\newblock Line search algorithms for locally lipschitz functions on riemannian
  manifolds.
\newblock {\em SIAM Journal on Optimization}, 28(1):596--619, 2018.

\bibitem{doi:10.1137/16M1069298}
S.~Hosseini and A.~Uschmajew.
\newblock A riemannian gradient sampling algorithm for nonsmooth optimization
  on manifolds.
\newblock {\em SIAM Journal on Optimization}, 27(1):173--189, 2017.

\bibitem{huang2013optimization}
W.~Huang.
\newblock Optimization algorithms on riemannian manifolds with applications.
\newblock 2013.

\bibitem{Huang2015A}
W.~Huang, P.-A. Absil, and K.~A. Gallivan.
\newblock A riemannian symmetric rank-one trust-region method.
\newblock {\em Mathematical Programming}, 150(2):179--216, 2015.

\bibitem{HuaGalAbs2015}
W.~Huang, K.~A. Gallivan, and P.-A. Absil.
\newblock A broyden class of quasi-newton methods for riemannian optimization.
\newblock {\em SIAM Journal on Optimization}, 25(3):1660--1685, 2015.

\bibitem{Kovna2015}
A.~Kovnatsky, K.~Glashoff, and M.~M. Bronstein.
\newblock Madmm: a generic algorithm for non-smooth optimization on manifolds.
\newblock In {\em European Conference on Computer Vision}, pages 680--696.
  Springer, 2016.

\bibitem{Lai2014A}
R.~Lai and S.~Osher.
\newblock A splitting method for orthogonality constrained problems.
\newblock {\em Journal of Scientific Computing}, 58(2):431--449, 2014.

\bibitem{li2018highly}
X.~Li, D.~Sun, and K.-C. Toh.
\newblock A highly efficient semismooth newton augmented lagrangian method for
  solving lasso problems.
\newblock {\em SIAM Journal on Optimization}, 28(1):433--458, 2018.

\bibitem{lin2019efficient}
M.~Lin, Y.-J. Liu, D.~Sun, and K.-C. Toh.
\newblock Efficient sparse semismooth newton methods for the clustered lasso
  problem.
\newblock {\em SIAM Journal on Optimization}, 29(3):2026--2052, 2019.

\bibitem{ozolicnvs2013compressed}
V.~Ozoli{\c{n}}{\v{s}}, R.~Lai, R.~Caflisch, and S.~Osher.
\newblock Compressed modes for variational problems in mathematics and physics.
\newblock {\em Proceedings of the National Academy of Sciences},
  110(46):18368--18373, 2013.

\bibitem{witten2009penalized}
D.~M. Witten, R.~Tibshirani, and T.~Hastie.
\newblock A penalized matrix decomposition, with applications to sparse
  principal components and canonical correlation analysis.
\newblock {\em Biostatistics}, 10(3):515--534, 2009.

\bibitem{Yang2014}
W.~H. Yang, L.-H. Zhang, and R.~Song.
\newblock Optimality conditions for the nonlinear programming problems on
  riemannian manifolds.
\newblock {\em Pacific Journal of Optimization}, 10(2):415--434, 2014.

\bibitem{YHAG2017}
X.~Yuan, W.~Huang, P.-A. Absil, and K.~Gallivan.
\newblock A riemannian quasi-newton method for computing the karcher mean of
  symmetric positive definite matrices.
\newblock {\em Florida State University,(FSU17-02)}, 2017.

\bibitem{Zhang2016First}
H.~Zhang and S.~Sra.
\newblock First-order methods for geodesically convex optimization.
\newblock In {\em Conference on Learning Theory}, pages 1617--1638, 2016.

\bibitem{Zhang2019}
J.~Zhang, S.~Ma, and S.~Zhang.
\newblock Primal-dual optimization algorithms over riemannian manifolds: an
  iteration complexity analysis.
\newblock {\em arXiv preprint arXiv:1710.02236}, 2017.

\bibitem{zheng2002communication}
L.~Zheng and D.~N.~C. Tse.
\newblock Communication on the grassmann manifold: A geometric approach to the
  noncoherent multiple-antenna channel.
\newblock {\em IEEE Transactions on Information Theory}, 48(2):359--383, 2002.

\bibitem{zou2006sparse}
H.~Zou, T.~Hastie, and R.~Tibshirani.
\newblock Sparse principal component analysis.
\newblock {\em Journal of computational and graphical statistics},
  15(2):265--286, 2006.

\end{thebibliography}
}

\end{document}